\def\Ddots{\mathinner{\mkern1mu\raise\p@
\vbox{\kern7\p@\hbox{.}}\mkern2mu
\raise4\p@\hbox{.}\mkern2mu\raise7\p@\hbox{.}\mkern1mu}}
\newtheorem {theorem}{Theorem}[section]
\newtheorem {lemma}[theorem]{Lemma}
\newtheorem {proposition}[theorem]{Proposition}
\newtheorem {conjecture}[theorem]{Conjecture}
\newenvironment{theorembis}[1]
  {%
   \addtocounter{theorem}{-1}%
   \begin{theorem}}
  {\end{theorem}}
\theoremstyle{definition}
\newtheorem {definition}[theorem]{Definition}
\newtheorem {remark}[theorem]{Remark}
\newtheorem {example}[theorem]{Example}
\newtheorem*{theorem*}{Theorem}
\newtheorem{thm}{Theorem}
\DeclareMathOperator\curl{curl}
\DeclareMathOperator\Hess{Hess}
\begin{document}

\title{From $2N$ to infinitely many escape orbits}
\author{Josep Fontana-McNally}
\thanks{Josep Fontana-McNally was supported by an INIREC grant Introduction to research financed under the  project “Computational, dynamical and geometrical complexity in fluid dynamics”, Ayudas Fundación BBVA a Proyectos de Investigación Científica 2021. Josep Fontana, Eva Miranda and Cédric Oms are partially supported by the Spanish State Research Agency grant PID2019-103849GB-I00 of AEI / 10.13039/501100011033 and by the AGAUR project 2021 SGR 00603.}
\address{{Laboratory of Geometry and Dynamical Systems, Department of Mathematics}, Universitat Polit\`{e}cnica de Catalunya and Department of Mathematics,  University of Toronto}
\email{josep.fontana@estudiantat.upc.edu}
\author{ Eva Miranda}
\thanks{ Eva Miranda  is supported by the Catalan Institution for Research and Advanced Studies via an ICREA Academia Prize 2021 and by the Alexander Von Humboldt foundation via a Friedrich Wilhelm Bessel Research Award. Eva Miranda is also supported by the Spanish State
Research Agency, through the Severo Ochoa and Mar\'{\i}a de Maeztu Program for Centers and Units
of Excellence in R\&D (project CEX2020-001084-M).  Eva Miranda and Daniel Peralta-Salas  acknowledge partial support from the grant
“Computational, dynamical and geometrical complexity in fluid dynamics”, Ayudas Fundación BBVA a Proyectos de Investigación Científica 2021.  }
\address{{Laboratory of Geometry and Dynamical Systems \& IMTech, Department of Mathematics}, Universitat Polit\`{e}cnica de Catalunya and CRM, Barcelona, Spain \\ Centre de Recerca Matemàtica-CRM}
\email{eva.miranda@upc.edu}
\author{Cédric Oms}
\thanks{Cédric Oms acknowledges financial support from the Margarita Salas postdoctoral contract 
financed by the European Union-NextGenerationEU and is partially supported by the ANR grant ``Cosy" (ANR-21-CE40-0002), partially supported by the ANR grant ``CoSyDy" (ANR-CE40-0014).}
\address{{Laboratory of Geometry and Dynamical Systems \& IMTech, Department of Mathematics}, Universitat Polit\`{e}cnica de Catalunya and BCAM Bilbao, Mazarredo, 14. 48009 Bilbao Basque Country - Spain}
\email{coms@bcamath.org}
\author{Daniel Peralta-Salas}
\address{Instituto de Ciencias Matemáticas (ICMAT), Consejo Superior de Investigaciones Científicas, Madrid}
\email{dperalta@icmat.es}
\thanks{Daniel Peralta-Salas is supported by the grants CEX2019-000904-S and PID2019-106715GB GB-C21 funded by
MCIN/AEI/10.13039/501100011033.}
\dedicatory{To Alain Chenciner with admiration on his 2N-birthday}

\maketitle

\begin{abstract}
     In this short note, we prove that singular Reeb vector fields associated with generic $b$-contact forms have either (at least) $2N$ or an infinite number of escape orbits, where $N$ denotes the number of connected components of the critical set. 
\end{abstract}


\section{Introduction}

The article \cite{MOP22} delved into the  dynamical behavior of $b$-Beltrami vector fields on $b$-manifolds  of dimension 3.
There, the authors examined the presence of escape orbits for $b$-Beltrami vector fields, which are orbits whose $\alpha$- or $\omega$-limit set is a point on the critical set of the underlying $b$-structure. Uhlenbeck's celebrated theorem on generic eigenfunctions of the Laplacian was key to demonstrating that for generic asymptotically exact $b$-metrics (Definition \ref{def:asymptoticallyexact}), $b$-Beltrami vector fields have escape orbits. A straightforward application of the singular version of Etnyre-Ghrist's correspondence between $b$-Reeb and $b$-Beltrami vector fields (Theorem \ref{thm:contactbeltrami}) then led to a similar result for Melrose $b$-contact forms (Definition \ref{def:melrosebcontact}) and the corresponding $b$-Reeb vector fields. 

In this article we build upon those results and show, through a refined analysis of \cite{MOP22}, that the number of escape orbits of a generic $b$-Reeb field -- not just those associated to Melrose $b$-contact forms -- or any $b$-Beltrami field for a generic asymptotically exact $b$-metric is either (at least) $2N$ or infinite.
By interpreting escape orbits as semi-orbits of singular periodic orbits as in \cite{MO21}, this result, in the context of singular contact geometry, is consistent with the long-standing conjecture of two or an infinite number of periodic orbits for Reeb vector fields.

In 1979, Weinstein put forward a conjecture stating that a Reeb vector field on a closed 3-manifold would always have at least one periodic orbit \cite{W79}. After steady progress towards the proof of this conjecture, it was proved in full generality in dimension $3$ by Taubes using Seiberg-Witten Floer homology \cite{T05}.
Further advancements were made by Cristofaro-Gardiner and Hutchings \cite{CH}, who proved that every Reeb vector field on a closed 3-manifold has at least two periodic orbits. Currently, it is conjectured that a Reeb vector field will have either two or infinitely many periodic orbits. The existence of infinitely many periodic orbits has been established under certain assumptions (see the survey \cite{GG}). Generically, this is known to be true, see \cite{irie}. Recently, Colin, Dehornoy, and Rechtman \cite{CDR22} proved that for non-degenerate Reeb orbits the number of periodic orbits is either two or infinity.

In the singular context, the study of $b$-Reeb vector fields was initiated in \cite{MO21}, where some cases of a singular version of the Weinstein conjecture were proved. Further investigation of this conjecture was pursued  in \cite{PW} and \cite{VW}.

In \cite{MOP22} a semi-local variant of this hypothesis was examined proving that escape orbits exist. However, the authors did not address the count of these orbits. In this paper we establish that the number of such orbits is either (at least) $2N$ or infinite, where $N$ is the number of connected components of the critical set. For the proof we use a more elementary approach that does not rely on the singular Reeb-Beltrami correspondence. 



\begin{thm}
Let $Z$ be a compact embedded surface in a $3$-dimensional manifold $M$. Then for a generic $b$-contact form having $Z$ as critical set, the associated $b$-Reeb vector field has at least $2N$ escape orbits, and infinitely many if the first Betti number of $Z$ is positive.
\end{thm}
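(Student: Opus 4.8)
The plan is to reduce the statement to Morse theory on the critical surface. Fix a connected component $Z_i$ of $Z$ and a collar $Z_i\times(-\varepsilon,\varepsilon)$ with transverse coordinate $z$, in which the $b$-contact form reads $\alpha=u\,\frac{dz}{z}+\beta$ with $\beta$ having no $dz$-component. Set $h_i:=u|_{Z_i}$, $\beta_i:=\beta|_{Z_i}$ and $\omega_i:=d\beta|_{Z_i}$. A direct computation of $\alpha\wedge d\alpha$ shows that the $b$-contact condition forces the $2$-form $h_i\,\omega_i+\beta_i\wedge dh_i$ to be an area form on $Z_i$; in particular $h_i$ cannot vanish where $dh_i=0$, and $\omega_i$ is nondegenerate near $\operatorname{Crit}(h_i)$. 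Solving $\iota_R d\alpha=0$, $\alpha(R)=1$ along $Z_i$ then yields that $R$ is tangent to $Z_i$ there, that it vanishes at every $p\in\operatorname{Crit}(h_i)$, and that near such $p$ it equals $r_0\,X_{h_i}$ plus the transverse term $r_0\,z\,\partial_z+O(z^2)$, where $X_{h_i}$ is the Hamiltonian field of $h_i$ with respect to $\omega_i$ and $r_0(p)=1/h_i(p)\neq 0$ (from $\alpha(R)=1$ at $p$). Consequently the linearization of $R$ at $p$ is block–triangular, with transverse eigenvalue $1/h_i(p)\neq 0$ and tangential block $\tfrac1{h_i(p)}DX_{h_i}|_p$, whose eigenvalues are a purely imaginary pair when $p$ is an extremum of $h_i$ and a real pair $\{\pm\nu\}$, $\nu>0$, when $p$ is a saddle.

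First I would settle the genericity: by a standard transversality argument, a $b$-contact form arbitrarily close (in the relevant $C^\infty$ topology) to a given one can be taken so that every $h_i$ is a Morse function — a small perturbation of $\alpha$ perturbing $u|_{Z_i}$ does not destroy the open $b$-contact condition, and this can be run along the lines of, or in parallel with, the Uhlenbeck–type genericity already used in \cite{MOP22}. From now on all $h_i$ are Morse.

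Next I would analyze the local dynamics at $p\in\operatorname{Crit}(h_i)$. If $p$ is an extremum, the linearization of $R$ has one nonzero real eigenvalue and a purely imaginary pair; by the strong (un)stable manifold theorem there is a one–dimensional $R$-invariant manifold through $p$, transverse to $Z_i$, every orbit of which converges to $p$ (in forward or backward time according to the sign of $h_i(p)$). This yields at least one escape orbit accumulating on $p$. If $p$ is a saddle, all three eigenvalues are real, nonzero, and the two of one sign outnumber the one of the other sign, so $p$ carries a two–dimensional (un)stable manifold $W$, spanned by one direction inside $Z_i$ and the transverse direction; hence $W\setminus Z$ is a two–dimensional set foliated by orbits converging to $p$, so $p$ yields infinitely many escape orbits. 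Orbits converging to distinct critical points are distinct.

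Finally I would assemble the count. Each closed connected component $Z_i$ carries a minimum and a maximum of $h_i$, each producing an escape orbit accumulating on $Z_i$, hence at least two escape orbits per component and at least $2N$ in total. If $b_1(Z)>0$ then $b_1(Z_i)>0$ for some $i$, and the weak Morse inequality $\#\operatorname{Crit}_1(h_i)\ge b_1(Z_i)$ forces $h_i$ to have a saddle, which by the previous step produces infinitely many escape orbits. I expect the main obstacle to be the local analysis near the critical points — in particular, justifying by pseudo–hyperbolic invariant manifold theory that in the elliptic–transverse (extremum) case the strong (un)stable orbits genuinely have $\{p\}$ as their limit set rather than a nearby level circle of $h_i$ — together with the somewhat delicate bookkeeping that identifies the reduced data $(h_i,\omega_i,r_0)$ on $Z_i$ from the $b$-contact condition and establishes the genericity of the Morse hypothesis.
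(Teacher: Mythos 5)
Your proposal follows essentially the same route as the paper: the decomposition $\alpha=f\,\frac{dz}{z}+\beta$ near $Z$, the observation that the $b$-contact condition makes $f\,d\beta+\beta\wedge df$ an area form on $Z$ (so $f\neq 0$ at critical points of $f|_Z$), an elementary $C^\infty$-small perturbation making $f|_Z$ Morse (no Uhlenbeck-type genericity is needed for this theorem, and the paper uses none), the block-triangular linearization at a critical point with a Hamiltonian $2\times 2$ block and nonzero transverse eigenvalue $1/f(p)$, the extremum/saddle dichotomy via strong (un)stable and center manifolds, and the Morse inequality $C_k\geq b_k(Z)$. Your concern about the elliptic-transverse case is resolved exactly as you suggest: the strong (un)stable manifold attached to the single real eigenvalue forces convergence to $p$ itself, which is also how the paper argues.

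There is, however, a genuine flaw in your final count. You take one escape orbit per extremum and then assert that ``orbits converging to distinct critical points are distinct.'' This is false: a single integral curve can converge to one critical point in forward time and to a different one in backward time --- these are precisely the singular periodic orbits of Definition \ref{def:spo}, and such connections do occur, including between different components of $Z$ (see Figure \ref{fig:multiplicities}). The sign of the transverse eigenvalue $1/f(p)$ can differ at the two extrema of a component (since $f|_Z$ may change sign away from its critical points), so nothing prevents the orbit you attach to the minimum from coinciding with the one you attach to the maximum; your bookkeeping then only guarantees $N$ distinct escape orbits, not $2N$. The repair is already contained in your local analysis and is how the paper counts: the one-dimensional transverse invariant manifold at each extremum consists of \emph{two} orbits, one on each side of $Z$, so the (at least) $2N$ critical points contribute at least $4N$ escape semiorbits limiting to critical points; since any integral curve carries at most two such semiorbits, there are at least $2N$ distinct escape orbits.
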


This result improves the main result contained in \cite{MOP22} in several ways: It does not only improve the lower bound on the number of escape orbits, but also greatly broadens the notion of \emph{genericity}. While in \cite{MOP22} the main result assumes that the $b$-contact forms are defined using an auxiliary so-called asymptotically exact $b$-metric, in the above result a $C^\infty$-small perturbation in the space of $b$-forms is sufficient to prove this lower bound.

However, when dealing with $b$-Beltrami fields that are not $b$-Reeb vector fields, we still require the genericity spectral arguments from \cite{MOP22}. In the final section, we examine the count of escape orbits in this specific scenario of $b$-Beltrami fields.

\begin{thm}
    Let $Z$ be a compact embedded surface in a $3$-dimensional manifold $M$. Then for a generic asymptotically exact $b$-metric on $(M,Z)$, any $b$-Beltrami vector field has at least $2N$ escape orbits, and infinitely many if the first Betti number of $Z$ is positive.
\end{thm}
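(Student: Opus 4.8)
The plan is to bootstrap from the first theorem (the $b$-Reeb statement) via the singular Etnyre--Ghrist correspondence (Theorem \ref{thm:contactbeltrami}), substituting for the ``genericity in the space of $b$-contact forms'' used there the spectral genericity of \cite{MOP22} — the one ingredient that, as in \cite{MOP22}, cannot be bypassed for $b$-Beltrami fields which are not $b$-Reeb fields. Concretely: fix a generic asymptotically exact $b$-metric $g$ and a $b$-Beltrami field $X$, so $\curl_g X=\lambda X$. On the locus where $X$ does not vanish as a $b$-vector field — in particular on a whole tubular neighborhood of the critical set $Z$, since the eigenfunction underlying $X$ has nonvanishing $b$-gradient there — Theorem \ref{thm:contactbeltrami} identifies $X$, up to multiplication by a positive function, with the $b$-Reeb field $R_{\alpha_X}$ of the $b$-contact form $\alpha_X$ dual to $X$. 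A positive reparametrization preserves unparametrized oriented orbits together with their $\alpha$- and $\omega$-limit sets, and since $Z$ sits ``at infinity'' for the $b$-structure these limit sets are approached only asymptotically; hence the escape orbits of $X$ and of $R_{\alpha_X}$ coincide, and it suffices to produce $2N$ (resp. infinitely many) escape orbits for $R_{\alpha_X}$.

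Next I would isolate from the proof of Theorem A the precise list of non-degeneracy hypotheses it places on a $b$-contact form. That argument realizes the escape orbits near a component $Z_i$ of $Z$ as the (un)stable separatrices of the first-order transverse dynamics of the $b$-Reeb field along $Z_i$, which is governed by a function (and an associated auxiliary flow) on $Z_i$ built from the $1$-jet of the $b$-contact form along $Z_i$; genericity is used only to ensure that this function is Morse — so its critical points are hyperbolic and each gives an honest escape orbit — and that, when $b_1(Z_i)>0$, the auxiliary flow carries a closed characteristic in a non-trivial homology class, around which a shooting/return argument manufactures a one-parameter family of escape orbits. Call $(\star)$ this finite set of conditions on the $1$-jet of the $b$-contact form along $Z$. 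Granting $(\star)$, Morse theory on the closed surface $Z_i$ already forces at least two escape orbits per component, hence at least $2N$ in total, and infinitely many on each component with positive first Betti number.

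The heart of the argument is then to verify $(\star)$ for $\alpha_X$ when $g$ is generic. Here, unlike in Theorem A, we are not allowed to perturb $\alpha_X$ freely inside the space of $b$-contact forms, since it is constrained to be Beltrami-compatible; we may only perturb $g$. I would write the $1$-jet of $\alpha_X$ along each $Z_i$ as an explicit expression in the eigenfunction $u$ defining $X$ and finitely many of its normal derivatives along $Z_i$, so that $(\star)$ becomes a condition on the jets of $u$ near $Z$. The $b$-version of Uhlenbeck's theorem established in \cite{MOP22} — simple spectrum, $b$-eigenfunctions Morse, and the relevant restrictions to and transversalities with $Z$ holding for a residual set of asymptotically exact $b$-metrics, simultaneously for all eigenvalues — then delivers $(\star)$ for such $g$, and the count follows from the previous paragraph.

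The step I expect to be the main obstacle is precisely this last one: transferring genericity from the space of $b$-contact forms, where one wiggles $\alpha$ directly, to the space of asymptotically exact $b$-metrics, where the admissible perturbations of $\alpha_X$ form a smaller, nonlinearly constrained family — one must in particular rule out that the auxiliary function attached to a Beltrami-compatible form is degenerate for reasons no metric perturbation can remove. Making this rigorous means either checking that $(\star)$ constrains only the part of the $1$-jet of $\alpha_X$ along $Z$ that metric perturbations move independently, or running a genuine transversality argument in the Banach manifold of $b$-metrics in tandem with the $b$-Uhlenbeck theorem of \cite{MOP22}; the delicate technical point is controlling the dependence of $b$-eigenfunctions on $g$ near $Z$, where the $b$-degeneration of the metric makes the elliptic theory subtle.
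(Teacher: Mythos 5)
Your overall plan---route through Theorem \ref{thm:contactbeltrami} and re-run the argument of Theorem \ref{thm:main} with metric genericity replacing the contact-form perturbation---is viable in spirit (it is essentially the strategy of \cite{MOP22}), but as written there is a genuine gap at exactly the point you flag and then leave open: you never verify your condition $(\star)$, and the two devices you propose for it (tracking how the $1$-jet of $\alpha_X$ depends on metric perturbations, or a fresh transversality argument in the Banach manifold of $b$-metrics) are neither carried out nor actually needed. The underlying problem is that you have misidentified what the proof of Theorem \ref{thm:main} uses: there is no ``closed characteristic in a non-trivial homology class'' and no shooting/return argument; the infinitude when $b_1(Z)>0$ comes from an index-one critical point of the exceptional Hamiltonian, where $\Hess<0$ makes the corresponding zero of the field hyperbolic with a two-dimensional stable or unstable manifold transverse to $Z$, all of whose orbits are escape orbits. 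Hence the correct $(\star)$ is simply: $-X_z|_Z$ is Morse and $0$ is a regular value of it (the second condition replaces the contact-case identity $f(p)g(p)=1$, which for a $b$-Beltrami field with possible zeros is not automatic). That statement is verbatim Theorem \ref{thm:hammorse}, so no new elliptic or transversality theory is required---but recognizing this, and checking that nothing more about the $1$-jet of $\alpha_X$ is needed, is precisely the content of the proof, and it is missing from your write-up.

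A second gap concerns the reduction itself. Your application of the correspondence near $Z$ rests on the claim that $X$ is non-vanishing as a section of ${}^bTM$ on a whole tubular neighborhood of $Z$ ``since the eigenfunction underlying $X$ has nonvanishing $b$-gradient there''; that justification is false as stated, since eigenfunctions do have critical points. What is true is that, once the generic conclusions are in place, the formulas \eqref{eq:beltramiexpression} show that a zero of $X|_Z$ as a $b$-section forces $f=-X_z|_Z$ and $df$ to vanish simultaneously, which ``$0$ is a regular value'' excludes; so the non-vanishing near $Z$ is a consequence of the genericity you are invoking, not an a priori input, and away from $Z$ the field may genuinely vanish, so the correspondence cannot be used globally (this is one of the two reasons the paper explicitly gives for not arguing as in Theorem \ref{thm:main}). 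The paper's own proof avoids the correspondence altogether: it computes $DX(p)$ at the critical points directly from \eqref{eq:beltramiexpression}, obtaining eigenvalues $\pm\sqrt{-\Hess f(p)}/(\lambda\sqrt{\det h})$ and $-f(p)$, and then repeats the Morse-theoretic count of Theorem \ref{thm:main}. Your route can be repaired along these lines, but as submitted the central genericity verification is absent and the auxiliary claims supporting the reduction are unjustified.
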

\subsection*{Acknowledgments}\textcolor{black}{The authors are indebted to the valuable comments of the anonymous referees, that improved substantially the results, proofs and the presentation of the previous version of this paper.}
\section{Preliminaries}

In this section we give a brief introduction to $b$-contact geometry and outline how it naturally arises in certain singularities of celestial mechanics. For the proof of the second main theorem (concerning $b$-Beltrami fields), we will also need to review the connections between $b$-Reeb and $b$-Beltrami fields, which we introduce in Section \ref{sec:bBeltrami}. 

\subsection{$b$-contact geometry}

The notion of $b$-manifolds was introduced by Melrose in \cite{M93}, and later expanded upon by Nest and Tsygan in the context of deformation quantization on manifolds with boundary \cite{NT96}. More recently, a systematic study of $b$-symplectic manifolds was carried out in \cite{GMP14}, which led to an increased interest in various aspects of such structures \cite{GL13,C13}. The language of $b$-manifolds is useful whenever one encounters logarithmic singularities in differential forms along a smooth hypersurface, which could be a boundary. These appear naturally, for example, when compactifying manifolds with cylindrical ends. In particular, $b^m$-symplectic and $b^m$-contact forms arise when applying the McGehee transformation to study double collisions and behaviour near infinity of the restricted planar three-body problem, as shown in \cite{invitation, MO21}. A natural question to ask is how different the dynamics of contact forms and $b$-contact forms are, leading to a formulation of a ``singular'' Weinstein conjecture in \cite{MO21}, which we introduce in Section \ref{sec:singularW}.

Let $(M,Z)$ be a smooth manifold with an \textcolor{black}{embedded} smooth hypersurface $Z$, which we shall call the \emph{critical set}. The \emph{$b$-tangent bundle} ${}^{b}TM$ is the vector bundle whose sections are vector fields tangent to $Z$. We call these sections \textit{$b$-vector fields}, and note that they are stable under the Lie bracket of vector fields, making them a Lie subalgebra of $Vect(M)$. We obtain the $b$-cotangent bundle ${}^bT^*M$ by taking the dual of ${}^{b}TM$, which allows us to define $b$-forms of degree $k$ as sections $\omega\in\Gamma(\bigwedge^k({}^bT^*M)):={}^b\Omega^k(M)$. Since the space of $b$-vector fields is involutive, we can define the exterior derivative $d:{}^b\Omega^k(M)\rightarrow{}^b\Omega^{k+1}(M)$ of $b$-forms in the natural way, as in \cite{MS21}:
\begin{align*}
    d\omega(V_0,\dots,V_k) = \sum_i (-1)^i &V_i(\omega(V_0,\dots,\hat{V_i},\dots,V_k)) + \\ 
    &+ \sum_{i<j}(-1)^{i+j}\omega([V_i,V_j],V_0,\dots,\hat{V_i},\dots,\hat{V_j},\dots,V_k).
\end{align*}

Given a defining function $z$ of the critical set $Z=\{z=0\}$, then $b$-forms admit a simple decomposition given by the following lemma, which makes evident the usefulness of this framework when dealing with differential forms with logarithmic singularities.

\begin{lemma}[\cite{GMP14}]\label{lemma:decomposition}
Let $\omega\in{}^b\Omega^k(M)$ be a $b$-form of degree $k$. Then $\omega$ decomposes as follows:
    \begin{equation*}
        \omega = \alpha \wedge \frac{dz}{z}+ \beta, \quad \alpha\in\Omega^{k-1}(M),\; \beta\in\Omega^k(M).
    \end{equation*} 
\end{lemma}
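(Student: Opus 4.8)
The plan is to reduce the statement to a pointwise linear-algebra fact about the $b$-cotangent bundle, prove it locally in coordinates adapted to the given global defining function $z$, and then patch the local data together with a partition of unity. The one genuine analytic input is the division (Hadamard) lemma: a smooth function vanishing on $Z=\{z=0\}$ equals $z$ times a smooth function.

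First I would establish a local frame for ${}^bTM$. Since $z$ is a defining function, $dz$ does not vanish along $Z$, so near any point $p\in Z$ one can complete $z$ to coordinates $(z,x_2,\dots,x_n)$. A general vector field $a_1\partial_z+\sum_{i\ge 2}a_i\partial_{x_i}$ is tangent to $Z$ if and only if $a_1$ vanishes on $\{z=0\}$, which by the division lemma means $a_1=z\,b_1$ with $b_1$ smooth. Hence the $C^\infty$-module of $b$-vector fields is freely generated near $p$ by $z\partial_z,\partial_{x_2},\dots,\partial_{x_n}$, i.e.\ these form a local frame of ${}^bTM$. Pairing against these, the dual coframe of ${}^bT^*M$ is $\frac{dz}{z},dx_2,\dots,dx_n$.

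Second, the local decomposition is then immediate linear algebra. In this coframe a section $\omega$ of $\bigwedge^k({}^bT^*M)$ is a smooth-coefficient sum of wedge monomials, and separating those monomials that contain the factor $\frac{dz}{z}$ from those that do not gives, on the coordinate chart $U$, a decomposition $\omega=\frac{dz}{z}\wedge\alpha_U+\beta_U$ in which $\alpha_U\in\Omega^{k-1}(U)$ and $\beta_U\in\Omega^k(U)$ are honest smooth forms (wedges of the $dx_j$ with smooth coefficients). On charts $U$ contained in $M\setminus Z$ tangency to $Z$ is no constraint, so there ${}^b\Omega^k(U)=\Omega^k(U)$ and $\tfrac{dz}{z}$ is an ordinary smooth form; one simply takes $\alpha_U=0$ and $\beta_U=\omega|_U$.

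Finally I would globalize. The crucial point is that, because $z$ is a single global defining function, $\frac{dz}{z}$ is one globally defined element of ${}^b\Omega^1(M)$. Choosing a locally finite cover $\{U_i\}$ by charts as above with a subordinate partition of unity $\{\rho_i\}$, set $\alpha:=\sum_i\rho_i\alpha_{U_i}$ and $\beta:=\sum_i\rho_i\beta_{U_i}$, each summand extended by zero. These are global smooth ordinary forms, and since wedging with the fixed global $b$-form $\frac{dz}{z}$ is $C^\infty$-linear,
$$\frac{dz}{z}\wedge\alpha+\beta=\sum_i\rho_i\Big(\frac{dz}{z}\wedge\alpha_{U_i}+\beta_{U_i}\Big)=\sum_i\rho_i\,\omega=\omega,$$
which is the asserted decomposition. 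I expect the only step requiring real care to be the local frame: the passage from ``tangent to $Z$'' to the explicit generators $z\partial_z,\partial_{x_i}$ rests on the division lemma and on verifying $C^\infty$-independence of these generators, after which everything is routine. I note finally that uniqueness is not claimed; were it needed, one would normalize $\alpha$ to contain no $dz$, using $dz\wedge\frac{dz}{z}=z\,\frac{dz}{z}\wedge\frac{dz}{z}=0$.
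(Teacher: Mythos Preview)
Your argument is correct and is essentially the standard proof of this decomposition (local coframe $\tfrac{dz}{z},dx_2,\dots,dx_n$ for ${}^bT^*M$ via the Serre--Swan/Hadamard description of $b$-vector fields, then a partition of unity using the globally defined $\tfrac{dz}{z}$). Note, however, that the paper does not give its own proof of this lemma: it is quoted from \cite{GMP14} and stated without argument, so there is no ``paper's proof'' to compare against beyond observing that your approach is the expected one underlying that reference.

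Two minor remarks. First, your local decomposition writes $\tfrac{dz}{z}\wedge\alpha_U$ whereas the statement has $\alpha\wedge\tfrac{dz}{z}$; this is of course only a sign in $\alpha$. Second, your closing comment about uniqueness matches the paper's own caveat immediately following the lemma, namely that the decomposition is only unique near $Z$; away from $Z$ one may freely shift terms between $\alpha\wedge\tfrac{dz}{z}$ (which is smooth there) and $\beta$, exactly as your choice $\alpha_U=0$ on charts in $M\setminus Z$ illustrates.
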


This decomposition is only unique near the critical surface $Z$. The exterior derivative for differential $b$-forms defined above is then equivalent to
\begin{equation*}
    d\omega :=d\alpha\wedge \frac{dz}{z} + d\beta.
\end{equation*}

\begin{remark}
    This definition  agrees with the usual exterior $d$ operator on $M\setminus Z$ and also extends smoothly over $M$ as a section of $\bigwedge^{k+1}({}^b T^* M)$. Furthermore, just as with the usual exterior derivative, it gives rise to a chain complex along with its $b$-cohomology \cite{MS21}.
\end{remark}

With these constructions, one can translate any definitions for usual geometric structures to their ``$b$ counterpart". For example, we can define a $b$-contact form and its $b$-Reeb vector field as follows.

\begin{definition}\label{def:b-structures}

        A $b$-contact form is a $b$-form of degree one on an odd-dimensional $b$-manifold $\alpha\in{}^b\Omega^1(M^{2n+1})$ such that $\alpha\wedge (d\alpha)^n$ is non-vanishing as a section of $\bigwedge^{2n+1}({}^bT^*M)$\textcolor{black}{, meaning that this defines a $b$-volume form}. Its kernel $\ker \alpha\subset {}^bTM$ is called a $b$-\emph{contact structure} and the associated $b$\emph{-Reeb vector field} is the unique $b$-vector field $R$ such that
        \begin{equation*}
            \begin{cases}
                \iota_R d\alpha = 0 \\
                \iota_R \alpha = 1.
            \end{cases}
        \end{equation*}
\end{definition}

\begin{remark}
    The equations $\iota_R\alpha=1$ and $\iota_R d\alpha=0$ are to be understood in the context of $b$-vector fields and $b$-forms. It is important to note that a $b$-Reeb vector field can vanish as a section of $TM$. This is because in a chart near the critical set $Z=\{z=0\}$, the $b$-vector field $z\frac{\partial}{\partial z}$ is a vanishing section of $TM$, while \textit{non}-vanishing section of ${}^bTM$. This is a fundamental difference between smooth Reeb dynamics and $b$-Reeb dynamics which ultimately leads to a reformulation of the Weinstein conjecture for $b$-contact forms (see Section \ref{sec:singularW}).

    \textcolor{black}{Note also that the $b$-volume form $\alpha\wedge (d\alpha)^n$ is given in a tubular neighbourhood around the critical set by $\frac{dz}{z}\wedge \Omega$, where $i^*\Omega$ is a volume form on $Z$, and $i:Z \xhookrightarrow{} M$ is the inclusion of $Z$ in $M$.}
\end{remark}

The key advantage of this framework is that it allows for a systematic treatment of geometric structures with singularities of the type seen in Lemma \ref{lemma:decomposition}, including an extension of action-angle coordinates and a KAM theorem for $b$-symplectic forms \cite{KMS16}. 

An analogous procedure can be followed to define $b^m$-forms starting from the $b^m$\emph{-tangent bundle}, whose sections are vector fields tangent to $Z$ with order $m$. This requires the presence of an $(m-1)$-jet of $Z$, but the rest of the constructions remain essentially the same (see \cite{S16} for details). The change to McGehee coordinates in the restricted circular $3$-body problem gives rise to a $b^3$-symplectic form.

\begin{example}[\cite{KMS16, DKRS}]\label{ex:McGehee}
    Consider the motion of a massless object in the gravitational field of two bodies $q_1,q_2$ with masses $(1-\mu)$ and $\mu$ respectively, orbiting in circular Keplerian motion. The corresponding Hamiltonian is
    \begin{equation*}
        H(q,p,t) =  K+U(t) = \frac{|p|^2}{2} - \frac{1-\mu}{|q - q_1(t)|} - \frac{\mu}{|q - q_2(t)|},
    \end{equation*}
    where $(q,p)$ are the positions and momenta of the massless object. Passing to rotating coordinates we can fix the positions of the massive bodies at $q_1 = (\mu,0)$ and $q_2 = (-(1-\mu),0)$ to eliminate the dependence on time at the cost of adding a term to the potential,
    \begin{equation*}
        H(q,p) = \frac{|p|^2}{2} - \frac{1-\mu}{|q - q_1|} - \frac{\mu}{|q - q_2|} + p_1q_2-p_2q_1.
    \end{equation*}
    After a symplectic change to polar coordinates $(r,\alpha, P_r, P_\alpha)$, the symplectic form becomes $\omega = dr\wedge dP_r + d\alpha\wedge dP_\alpha$, and the Hamiltonian is expressed as
    \begin{equation*}
        H(r,\alpha,P_r,P_\alpha) = \frac{P_r^2}{2} + \frac{P_\alpha^2}{2r^2} - P_\alpha + U(r,\alpha).
    \end{equation*}
    Now, to study the behaviour of the system near infinity, it is convenient to apply the McGehee change of coordinates $r = \frac{2}{x^2}$ (see for instance \cite{DKRS}). If we do not require this change to be symplectic, we obtain the Hamiltonian
    \begin{equation*}
        \frac{P_r^2}{2} + \frac{x^4P_\alpha^2}{8} - P_\alpha - x^4\frac{1-\mu}{4 - 4\mu x^2\cos\alpha + \mu^2x^4} - x^4\frac{\mu}{4 - 4(1-\mu)x^2\cos\alpha + (1-\mu)^2x^4},
    \end{equation*}
    and the symplectic form becomes the $b^3$-symplectic form $\omega = -4\frac{dx}{x^3}\wedge P_r + d\alpha \wedge dP_\alpha$. Using this expression, it is shown in \cite{MO21} that positive energy level sets are $b^3$-contact manifolds and that the Hamiltonian vector field is $b^3$-Reeb. Furthermore, the $b^3$-Reeb field has infinitely many periodic orbits on the manifold at infinity $\{x=0\}$.
    
\end{example}


\subsection{The singular Weinstein conjecture}\label{sec:singularW}

In view of this large class of dynamics generated by $b$-Reeb vector fields, closely related to the dynamics generated by smooth Reeb vector fields, a series of interesting questions emerge. How does the $b$-Reeb vector field behave on the critical surface? How does adding a critical surface and introducing a singularity in the contact form along this surface affect the dynamics away from the critical surface? What can we say about Reeb dynamics on open manifolds when compactifying the manifold yields $b$-Reeb dynamics? Perhaps the most interesting question is, does the Weinstein conjecture also hold on manifolds with boundary or more generally on compact $b$-manifolds, and if not, how would it translate to this setting?

A detailed analysis of $b$-contact forms and their dynamics can be found in \cite{MO18}, where the first question is answered for $b$-manifolds of dimension 3. Indeed, the fact that the $b$-Reeb vector field of Example \ref{ex:McGehee} has infinitely many periodic orbits on the critical set is not a coincidence.

\begin{proposition}[\cite{MO18}]\label{prop:reebatz}
    Let $(M,Z,\alpha)$ be a $b$-contact manifold of dimension $3$, and write $\alpha = f\frac{dz}{z} + \beta$ with $f\in C^\infty(M)$ and $\beta\in\Omega^1(M)$ as in Lemma \ref{lemma:decomposition}. Then the restriction on $Z$ of the $2$-form $\omega = fd\beta + \beta\wedge df$ is symplectic and the $b$-Reeb vector field $R_\alpha$ is Hamiltonian on $Z$ with respect to $\omega$ with Hamiltonian function $-f|_Z$, i.e. $\iota_{R_\alpha}\omega = df$. The Hamiltonian $-f|_Z$ is called the \emph{exceptional Hamiltonian} associated with $\alpha$.
\end{proposition}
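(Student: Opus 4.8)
The plan is to prove both assertions by a direct computation in the $b$-de Rham calculus, working in a tubular neighbourhood of $Z$ with a fixed defining function $z$. First I would differentiate the decomposition $\alpha=f\frac{dz}{z}+\beta$ from Lemma \ref{lemma:decomposition}: since $\frac{dz}{z}$ is $b$-closed, $d\alpha=df\wedge\frac{dz}{z}+d\beta$. Expanding the top $b$-form, the $\frac{dz}{z}\wedge\frac{dz}{z}$ term dies and, after collecting the remaining pieces, one gets
\begin{equation*}
    \alpha\wedge d\alpha=\frac{dz}{z}\wedge\omega+\beta\wedge d\beta,\qquad \omega=f\,d\beta+\beta\wedge df .
\end{equation*}
Now $\beta\wedge d\beta$ is an honest smooth $3$-form, so in coordinates $(z,x,y)$ it equals $h\,dz\wedge dx\wedge dy=zh\,\frac{dz}{z}\wedge dx\wedge dy$; it therefore contributes nothing to the leading coefficient of the $b$-volume form along $Z$. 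Hence the $b$-contact condition is equivalent, along $Z$, to the non-vanishing of $i^*\omega$, where $i\colon Z\hookrightarrow M$. A nowhere-zero $2$-form on the surface $Z$ is automatically closed (every $3$-form on $Z$ vanishes) and non-degenerate, hence symplectic, which settles the first claim.

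For the second claim I would write the $b$-Reeb field near $Z$ as $R=a\,z\partial_z+X$ with $a\in C^\infty$ and $X$ tangent to the fibration, so that $\iota_R\frac{dz}{z}=a$ (note this is smooth precisely because $R$ is tangent to $Z$) and the restriction $R|_Z=X|_Z$ is a genuine vector field on $Z$. Decomposing the defining equations into their $\frac{dz}{z}$-component and smooth component, and using uniqueness of the decomposition of a $b$-$1$-form near $Z$, the equation $\iota_R d\alpha=0$ gives $Rf=0$ together with $\iota_R d\beta=a\,df$, while $\iota_R\alpha=1$ gives $fa+\iota_R\beta=1$.

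Finally I would pull these relations back to $Z$. Using $i^*(\iota_R\gamma)=\iota_{R|_Z}(i^*\gamma)$ for $R$ tangent to $Z$, and $i^*df=d(f|_Z)$, one computes
\begin{equation*}
    \iota_{R|_Z}\,i^*\omega=(f|_Z)\,\iota_{R|_Z}(i^*d\beta)+(\iota_{R|_Z}i^*\beta)\,d(f|_Z)-\big(R|_Z(f|_Z)\big)\,i^*\beta .
\end{equation*}
The last term vanishes since $R|_Z(f|_Z)=(Rf)|_Z=0$; the first term equals $(f|_Z)\,a|_Z\,d(f|_Z)$ by $\iota_R d\beta=a\,df$; and then $fa+\iota_R\beta=1$ collapses the bracketed coefficient to $1$, yielding $\iota_{R_\alpha}\,i^*\omega=d(f|_Z)$. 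With the sign convention $\iota_{X_H}\omega=-dH$ this is exactly the statement that $R_\alpha$ is Hamiltonian on $(Z,i^*\omega)$ with Hamiltonian $-f|_Z$. The computation itself is routine; the only points that require care are the $b$-calculus bookkeeping — that $\iota_R\frac{dz}{z}$ is a non-singular function, that interior product and pullback to $Z$ interact correctly with the decomposition, and that a smooth $3$-form contributes nothing to the leading $b$-volume coefficient along $Z$ — so I do not anticipate a genuine obstacle. One may also remark that, although $\beta$ and hence $\omega$ depend on the chosen defining function, $f|_Z$ does not, so the exceptional Hamiltonian is intrinsically attached to $\alpha$.
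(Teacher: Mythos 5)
Your computation is correct and is essentially the standard proof of this proposition, which the present paper does not reprove but imports from \cite{MO18}: expand $\alpha\wedge d\alpha$ to identify $i^*\omega$ as the leading coefficient of the $b$-volume along $Z$, then split the Reeb equations according to the decomposition and pull everything back to $Z$. The only imprecise point is the appeal to ``uniqueness of the decomposition'': it is unique only up to trading terms $zc\,\frac{dz}{z}=c\,dz$ between the two summands, so strictly you obtain $Rf=0$ and $\iota_R d\beta=a\,df$ only modulo multiples of $z$ (respectively $dz$) — but since you use these identities only after restriction to $Z$, where $dz$ pulls back to zero, the argument is unaffected.
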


As is observed in \cite{MO21}, when $Z$ is closed the exceptional Hamiltonian $-f|_Z$ cannot be locally constant, because then the symplectic form would also be exact ($\omega = d(f\beta)$). By Stokes' Theorem one sees that $\omega$ cannot be exact and an area form at the same time. This implies that there are infinitely many periodic orbits on closed critical sets of 3-dimensional $b$-manifolds. The dynamics of $b$-Reeb vector fields are further studied in \cite{MO21}, where the authors prove that there exist compact $b$-contact manifolds in any dimension whose $b$-Reeb vector fields have no periodic orbits away from the critical set. However, all of these examples exhibit \textit{singular periodic orbits}. 

\begin{definition}\label{def:spo}
    Let $(M,Z,\alpha)$ be a $b$-contact manifold. A \textit{singular periodic orbit} is an integral curve $\gamma:\mathbb{R}\rightarrow M\setminus Z$ of the $b$-Reeb vector field such that $\lim_{t\rightarrow \pm\infty} \gamma(t) = p_\pm\in Z$. An \textit{escape orbit} is an integral curve such that at least one of the semiorbits has a stationary limit point on $Z$ (see Figure \ref{fig:examplesingularperiodic}).
\end{definition}

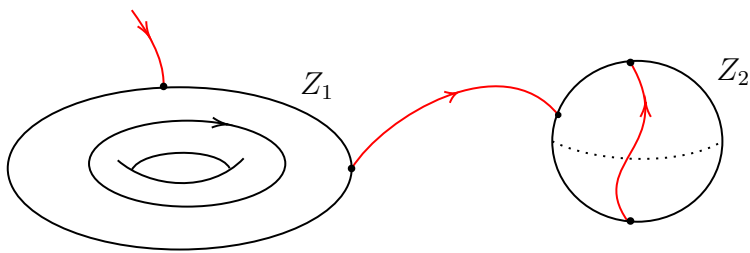
\begin{figure}[H]
    \centering
    \tikzset{every picture/.style={line width=0.75pt}} 

\begin{tikzpicture}[x=0.75pt,y=0.75pt,yscale=-1,xscale=1]

\draw [red]   (414.06,108.41) .. controls (439.19,150.83) and (391.24,158.24) .. (412.61,188.42) ;
\draw  [red] (419.04,135.9) .. controls (420.5,133.34) and (421.37,130.79) .. (421.65,128.23) .. controls (421.97,130.79) and (422.87,133.32) .. (424.38,135.86) ;
\draw [red]    (273.47,161.92) .. controls (289.73,135.73) and (349.45,100.1) .. (377.64,134.88) ;
\draw  [red] (320.13,123.14) .. controls (322.59,123.97) and (324.74,124.17) .. (326.58,123.74) .. controls (325.06,124.79) and (323.86,126.47) .. (322.99,128.78) ;
\draw  [red]  (178.61,119.76) .. controls (178.71,105) and (169.9,92.39) .. (162.55,82) ;
\draw  [red] (169.86,87.6) .. controls (169.94,90.54) and (170.5,93.2) .. (171.55,95.56) .. controls (170.02,93.48) and (168,91.7) .. (165.49,90.2) ;
\draw [black] (100,161.87) .. controls (100,139.25) and (138.83,120.92) .. (186.73,120.92) .. controls (234.63,120.92) and (273.47,139.25) .. (273.47,161.87) .. controls (273.47,184.5) and (234.63,202.83) .. (186.73,202.83) .. controls (138.83,202.83) and (100,184.5) .. (100,161.87) -- cycle ;
\draw [black]   (155.47,157.79) .. controls (171.61,171.8) and (201.86,174.8) .. (219.01,156.79) ;
\draw [black] (162.53,162.79) .. controls (167.57,151.78) and (205.9,150.78) .. (211.95,161.79) ;

\draw   (374.63,148.2) .. controls (374.63,125.78) and (393.84,107.6) .. (417.54,107.6) .. controls (441.23,107.6) and (460.44,125.78) .. (460.44,148.2) .. controls (460.44,170.62) and (441.23,188.8) .. (417.54,188.8) .. controls (393.84,188.8) and (374.63,170.62) .. (374.63,148.2) -- cycle ;
\draw [dash pattern={on 0.84pt off 2.51pt}]  (374.63,148.2) .. controls (396.08,158.95) and (438.99,161.13) .. (460.44,148.2) ;

\draw  [fill={rgb, 255:red, 0; green, 0; blue, 0 }  ,fill opacity=1 ] (412.61,188.42) .. controls (412.61,187.65) and (413.26,187.01) .. (414.06,187.01) .. controls (414.87,187.01) and (415.52,187.65) .. (415.52,188.42) .. controls (415.52,189.2) and (414.87,189.83) .. (414.06,189.83) .. controls (413.26,189.83) and (412.61,189.2) .. (412.61,188.42) -- cycle ;
\draw  [fill={rgb, 255:red, 0; green, 0; blue, 0 }  ,fill opacity=1 ] (412.61,108.41) .. controls (412.61,107.63) and (413.26,107) .. (414.06,107) .. controls (414.87,107) and (415.52,107.63) .. (415.52,108.41) .. controls (415.52,109.19) and (414.87,109.82) .. (414.06,109.82) .. controls (413.26,109.82) and (412.61,109.19) .. (412.61,108.41) -- cycle ;
\draw  [fill={rgb, 255:red, 0; green, 0; blue, 0 }  ,fill opacity=1 ] (376.5,134.88) .. controls (376.5,134.25) and (377.01,133.74) .. (377.64,133.74) .. controls (378.27,133.74) and (378.78,134.25) .. (378.78,134.88) .. controls (378.78,135.51) and (378.27,136.02) .. (377.64,136.02) .. controls (377.01,136.02) and (376.5,135.51) .. (376.5,134.88) -- cycle ;
\draw  [fill={rgb, 255:red, 0; green, 0; blue, 0 }  ,fill opacity=1 ] (272,161.92) .. controls (272,161.14) and (272.62,160.51) .. (273.39,160.51) .. controls (274.17,160.51) and (274.79,161.14) .. (274.79,161.92) .. controls (274.79,162.7) and (274.17,163.33) .. (273.39,163.33) .. controls (272.62,163.33) and (272,162.7) .. (272,161.92) -- cycle ;
\draw  [fill={rgb, 255:red, 0; green, 0; blue, 0 }  ,fill opacity=1 ] (177.22,120.5) .. controls (177.22,119.72) and (177.84,119.09) .. (178.61,119.09) .. controls (179.38,119.09) and (180.01,119.72) .. (180.01,120.5) .. controls (180.01,121.28) and (179.38,121.91) .. (178.61,121.91) .. controls (177.84,121.91) and (177.22,121.28) .. (177.22,120.5) -- cycle ;
\draw  [black ] (141,159.48) .. controls (141,147.5) and (163.16,137.79) .. (190.5,137.79) .. controls (217.84,137.79) and (240,147.5) .. (240,159.48) .. controls (240,171.46) and (217.84,181.17) .. (190.5,181.17) .. controls (163.16,181.17) and (141,171.46) .. (141,159.48) -- cycle ;
\draw  [black ] (203.48,136.23) .. controls (205.87,137.95) and (208.32,139.11) .. (210.84,139.7) .. controls (208.26,139.67) and (205.61,140.19) .. (202.9,141.28) ;

\draw (256,120) node {$\displaystyle Z_1$};
\draw (466,113) node {$\displaystyle Z_2$};

\end{tikzpicture}
    \caption{Examples of an escape orbit (on the left, tending to a point on the critical torus) and two singular periodic orbits. The critical set is a disjoint union of a torus $Z_1$ and a sphere $Z_2$, and a $b$-Reeb orbit on the critical torus is depicted in black.}
    \label{fig:examplesingularperiodic}
\end{figure}

Since there exist $b$-Reeb vector fields with no periodic orbits away from $Z$, in \cite{MO21} the authors conjecture that these singular periodic orbits are the appropriate invariant dynamical sets to consider to reformulate the Weinstein conjecture for $b$-contact manifolds.

\begin{conjecture}[Singular Weinstein conjecture]\label{conj:singularweinstein}
Let $(M, Z, \alpha)$ be a compact $b^m$-contact manifold. Then there exists at least one singular periodic orbit.
\end{conjecture}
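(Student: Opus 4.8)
The plan is to realize a singular periodic orbit as a heteroclinic connection between two equilibria of the $b$-Reeb field lying on the critical set, obtained from a genuine periodic orbit of a desingularized contact form in a singular (slow--fast) limit. I work primarily in dimension three, where Proposition~\ref{prop:reebatz} supplies the essential structure; the general $b^m$ case would be handled through the analogous Hamiltonian picture for $R$ restricted to $Z$.

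First I would pin down the only admissible endpoints. If $\gamma(t)\to p_\pm\in Z$, then $p_\pm$ must be a rest point of $R$ regarded as a section of $TM$, and by Proposition~\ref{prop:reebatz} the zeros of $R|_{TM}$ on $Z$ are exactly the critical points of the exceptional Hamiltonian $h=-f|_Z$, which exist since $Z$ is compact. Writing $\alpha=f\frac{dz}{z}+\beta$ and $R=g\,z\partial_z+(\text{tangential})$ near $Z$, the relation $\iota_R\alpha=1$ evaluated at an equilibrium $p$ forces $g(p)=1/f(p)$, so the normal linearization is $\dot z=f(p)^{-1}z+O(z^2)$. Hence each equilibrium is normally hyperbolic with a one-dimensional invariant manifold transverse to $Z$: a \emph{source} with an outgoing escape semiorbit when $f(p)>0$, and a \emph{sink} with an incoming escape semiorbit when $f(p)<0$. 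A singular periodic orbit is then precisely an orbit of $W^u(p)$ for a source $p$ that coincides with an orbit of $W^s(q)$ for a sink $q$.

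Second, to produce the connection I would exploit the slow--fast structure: the normal direction is exponentially fast while the tangential motion is the slow Hamiltonian flow $X_h$ on $Z$. Desingularizing $\alpha$ along the lines of the Guillemin--Miranda--Weitsman desingularization away from a collar $\{|z|\le\varepsilon\}$ yields a smooth contact form $\alpha_\varepsilon$ on $M$, and Taubes' proof of the Weinstein conjecture supplies a closed Reeb orbit $\gamma_\varepsilon$. The aim is to show that, as $\varepsilon\to 0$, the orbits $\gamma_\varepsilon$ converge, in the sense of geometric singular perturbation theory, to a closed \emph{singular orbit} consisting of fast normal segments in $M\setminus Z$ and slow $X_h$-segments on $Z$; a nondegenerate such cycle that leaves $Z$ at a source and returns at a sink is the desired singular periodic orbit.

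The main obstacle is the control of this limit, and it is exactly where the depth of the smooth Weinstein conjecture reappears. One must (a) prevent the orbits $\gamma_\varepsilon$ from converging to a genuine periodic orbit bounded away from $Z$ --- a case that, if it occurs, proves an even stronger statement and must be separated out; (b) rule out collapse to a closed level curve of $h$ inside $Z$, which is the default $\omega$-limit and does not qualify as a singular periodic orbit; and (c) guarantee the coexistence of a source and a sink, i.e. that $f$ takes both signs at critical points of $h$, together with a correctly oriented passage across the collar, where the parity of $m$ changes the sign behaviour of the normal direction and the two parities should be treated separately. Absent a uniform compactness statement for the $\gamma_\varepsilon$, the realistic fallback is to establish the connection only under structural hypotheses (for instance $0$ a regular value in the range of $h$, or $Z$ of positive genus, so that the escape-orbit count established above already forces infinitely many asymptotic semiorbits to pair), and to regard the general assertion as the genuine conjectural content, whose resolution would require a Floer- or ECH-type theory adapted to the cylindrical end created by $Z$.
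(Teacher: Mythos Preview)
The statement you are attempting to prove is presented in the paper as an open \emph{conjecture}, not as a theorem; the paper contains no proof of it. The paper's own contributions (Theorems~\ref{thm:main} and~\ref{thm:main2}) establish lower bounds on the number of \emph{escape orbits}, which is strictly weaker: an escape orbit need only have one asymptotic endpoint on $Z$, whereas a singular periodic orbit must have both. So there is no paper proof to compare against, and your proposal should be read as a strategy toward an open problem rather than as a candidate proof.

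As a strategy it is honest about its own incompleteness --- you yourself flag items (a)--(c) and concede in the last paragraph that the general case remains ``the genuine conjectural content''. That is an accurate assessment: what you have written is a heuristic outline, not a proof. A few points in the outline are also technically off. Your description of each equilibrium as a ``source'' or ``sink'' with a one-dimensional transverse invariant manifold is only correct at critical points of $h$ with $\Hess h(p)>0$; at saddle points of $h$ the tangential eigenvalues $\lambda_\pm=\pm\sqrt{-\Hess h(p)}$ are real and the transverse invariant manifold is two-dimensional, as the paper's analysis in the proof of Theorem~\ref{thm:main} makes explicit. More seriously, your step (c) requires that $f$ take both signs at critical points of $h$, but the paper only guarantees $f(p)\neq 0$ at such points, not that both signs occur; indeed the remark following Theorem~\ref{thm:main} notes that the exceptional Hamiltonian can be everywhere positive. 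In that situation every transverse eigenvalue $g(p)=1/f(p)$ has the same sign, all one-dimensional transverse manifolds are unstable (say), and there is no obvious sink for the desired heteroclinic to land on. Finally, the desingularization-plus-Taubes-plus-singular-limit scheme hinges entirely on controlling where the closed orbits $\gamma_\varepsilon$ accumulate as $\varepsilon\to 0$; absent any action or energy bound there is no mechanism preventing them from collapsing to a regular level set of $h$ inside $Z$, which is exactly the failure mode you identify in (b) and which the proposal does not address.
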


The exceptional Hamiltonian introduced in Proposition \ref{prop:reebatz} also plays an important role in \cite{MOP22}, where the authors use it to prove partial results towards the singular Weinstein conjecture. In particular, an analysis of the exceptional Hamiltonian of a certain class of $b$-contact forms which we call \textit{Melrose $b$-contact forms} (see Definition \ref{def:melrosebcontact}) is used to prove lower bounds on the number of escape orbits in terms of the topology of the critical surface. Since singular periodic orbits are a special case of escape orbits, this is a first step towards the singular Weinstein conjecture. Our aim in this article is to give a lower bound on the number of these orbits. Interestingly enough, the lower bounds obtained resemble the ``$2$ or $\infty$-conjecture", in the spirit of the recent result by Colin--Dehornoy--Rechtman in \cite{CDR22}.

In view of Example \ref{ex:McGehee}, we note that if the critical surface arises from a compactification procedure on the phase space of a dynamical system, these results provide lower bounds on the number of escape orbits in the usual sense, that is, orbits going ``to infinity''. 

To better understand the exceptional Hamiltonian it is convenient to look at how it arises in the context of $b$-Beltrami fields, which we introduce in the following subsection.


\subsection{$b$-Beltrami vector fields}\label{sec:bBeltrami}

Following the ideas of Sullivan in \cite{Sullivan}, Etnyre and Ghrist established a connection between contact geometry and a special class of stationary solutions to the incompressible Euler equations of hydrodynamics, known as \emph{Beltrami vector fields}  \cite{EG}. Since then, there has been a rich interaction between the two fields \cite{PR, CMP21}. Beltrami vector fields are eigenfunctions of the curl operator with respect to a metric and a distinguished volume form on a Riemannian $3$-manifold. As is done in \cite{CMP19}, we can extend this notion to $b$-manifolds with a Riemannian $b$-metric, which is a bilinear\textcolor{black}{, symmetric} positive-definite section $g\in\Gamma({}^{b}T^*M\otimes{}^{b}T^*M)$, and a $b$-volume form (see Definition \ref{def:b-structures}).

\begin{definition}
    A $b$\emph{-Beltrami vector field} $X$ is a vector field on a Riemannian $b$-manifold with a distinguished $b$-volume form such that $\curl X = \lambda X$, for some nonzero constant $\lambda$, where the curl operator is defined as usual by $\iota_{\curl X}\mu =d(\iota_Xg)$, with respect to the $b$-metric $g$ and the $b$-volume form $\mu$.
\end{definition}

We emphasize that the volume form and the metric can be chosen independently one of each other in the previous definition (although it is customary to assume that $\mu$ is the Riemannian volume associated to the metric $g$).

The correspondence between $b$-Beltrami vector fields and $b$-Reeb fields is then the following.

\begin{theorem}[\cite{CMP19}]\label{thm:contactbeltrami}
    Let $(M,Z)$ be $3$-dimensional $b$-manifold. For each $b$-Beltrami vector field $X$ which is non-vanishing as a section of ${}^{b}TM$ there is a $b$-contact form for which $X$ is $b$-Reeb up to rescaling. Conversely, for each $b$-contact form with a rescaling of the $b$-Reeb vector field $X$ there is a $b$-metric and a distinguished $b$-volume form for which $X$ is $b$-Beltrami.
\end{theorem}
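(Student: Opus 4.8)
The plan is to prove both implications by the explicit constructions of the smooth Etnyre--Ghrist correspondence \cite{EG}, transported to the $b$-category. The reason this works is that the interior product, the exterior derivative, and hence the curl operator $\iota_{\curl X}\mu = d(\iota_X g)$ are defined for $b$-vector fields and $b$-forms by exactly the same formulas, and the identities they obey --- $\iota_X\iota_X = 0$, the graded Leibniz rule for $\iota_X$, $d^2 = 0$, Cartan's magic formula --- are pointwise statements in the fibres of ${}^bTM$ and ${}^bT^*M$, and so continue to hold. Moreover a Riemannian $b$-metric is an honest fibre metric on the vector bundle ${}^bTM$, so that positive-definiteness and the strict positivity of $|X|^2_g = g(X,X)$ for a nowhere-vanishing $b$-vector field $X$ hold everywhere on $M$, in particular along the critical set $Z$.

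For the direction \emph{$b$-Beltrami $\Rightarrow$ $b$-Reeb}: given $X$ with $\curl X = \lambda X$, $\lambda\neq 0$, non-vanishing as a section of ${}^bTM$, I would set $\alpha := \iota_X g\in{}^b\Omega^1(M)$, so that $d\alpha = \iota_{\curl X}\mu = \lambda\,\iota_X\mu$. Contracting with $X$ the $4$-form $(\iota_X g)\wedge\mu$, which vanishes since $\dim M = 3$, and using $\iota_X\iota_X g = g(X,X) = |X|^2_g$, gives the pointwise identity $(\iota_X g)\wedge\iota_X\mu = |X|^2_g\,\mu$; hence $\alpha\wedge d\alpha = \lambda |X|^2_g\,\mu$ is a nowhere-vanishing section of $\bigwedge^3({}^bT^*M)$, i.e.\ $\alpha$ is a $b$-contact form. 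Finally $\iota_X d\alpha = \lambda\,\iota_X\iota_X\mu = 0$ and $\iota_X\alpha = |X|^2_g$, so the $b$-Reeb field of $\alpha$ is $R_\alpha = |X|^{-2}_g\,X$, which is $X$ up to a positive rescaling.

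For the converse \emph{$b$-Reeb $\Rightarrow$ $b$-Beltrami}: let $\alpha$ be a $b$-contact form on $M^3$ with $b$-Reeb field $R$. Since $\iota_R\alpha = 1$, $\alpha$ is a nowhere-zero $b$-$1$-form, so $\xi := \ker\alpha$ is a rank-$2$ subbundle of ${}^bTM$ with ${}^bTM = \langle R\rangle\oplus\xi$, and $\alpha\wedge d\alpha\neq 0$ together with $\iota_R d\alpha = 0$ forces $d\alpha|_\xi$ to be fibrewise symplectic. I would then choose a $d\alpha$-compatible complex structure $J$ on $\xi$, which exists globally because the space of compatible complex structures on a symplectic vector space is non-empty and contractible, so a section is produced by the usual partition-of-unity argument, insensitive to the replacement of $TM$ by ${}^bTM$. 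With $p:{}^bTM\to\xi$ the projection along $R$, form the adapted $b$-metric $g_0 := \alpha\otimes\alpha + d\alpha(\,\cdot\,,J\,p(\,\cdot\,))$, a symmetric positive-definite section of ${}^bT^*M\otimes{}^bT^*M$ with $\iota_R g_0 = \alpha$, and the $b$-volume form $\mu_0 := \alpha\wedge d\alpha$; then $\iota_R\mu_0 = d\alpha = d(\iota_R g_0)$, whence $\curl R = R$ and $R$ is $b$-Beltrami. For a prescribed positive rescaling $X = hR$ I would instead take $\mu := h^{-1}\mu_0$ and $g := h^{-1}\,\alpha\otimes\alpha + (g_0 - \alpha\otimes\alpha)$, again a $b$-metric and $b$-volume form, for which $\iota_X g = \alpha$ and $\iota_X\mu = d\alpha$, so that $d(\iota_X g) = \iota_X\mu$ and $\curl X = X$.

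I do not anticipate a genuine obstacle: the argument is the smooth one run fibrewise in ${}^bTM$. The two points meriting a line of care are the existence of the adapted $b$-metric --- i.e.\ of a $d\alpha$-compatible complex structure on the $b$-symplectic distribution $\ker\alpha$, settled by fibrewise contractibility plus a partition of unity --- and the check that the $b$-metrics above are positive-definite across $Z$, which is automatic since each is by construction a fibre metric on ${}^bTM$. The only slightly fiddly bookkeeping is reconciling the normalizations behind ``$X$ is $b$-Reeb'' and ``$X$ is $b$-Beltrami'', handled by the rescaling $\mu = h^{-1}\mu_0$, $g = h^{-1}\alpha\otimes\alpha + (g_0-\alpha\otimes\alpha)$.
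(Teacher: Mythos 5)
Your proposal is correct and coincides with the argument of the cited source \cite{CMP19} (the present paper states Theorem \ref{thm:contactbeltrami} without proof): one direction takes $\alpha=\iota_X g$ and uses $\alpha\wedge d\alpha=\lambda\,|X|^2_g\,\mu$, the other builds the adapted $b$-metric $\alpha\otimes\alpha+d\alpha(\cdot,Jp(\cdot))$ and the $b$-volume $\alpha\wedge d\alpha$, everything running fibrewise in ${}^bTM$ exactly as in the smooth Etnyre--Ghrist correspondence. The only detail to make explicit is that your rescaled metric $g=h^{-1}\alpha\otimes\alpha+(g_0-\alpha\otimes\alpha)$ requires $h>0$; for a rescaling by a negative function, first replace $\alpha$ by $-\alpha$ (still a $b$-contact form in dimension three, with $b$-Reeb field $-R$).
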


\begin{remark}\label{rm:contactfrombeltrami}
    Given a $b$-metric $g$ with a Beltrami field $X$, the way the corresponding $b$-contact form is constructed in Theorem \ref{thm:contactbeltrami} is by contracting $X$ with the metric. In other words, $X$ is $b$-Reeb up to rescaling for the $b$-contact form
    \begin{equation*}
        \alpha = g(X,\cdot).
    \end{equation*}
\end{remark}

In Section \ref{sec:escapeBeltrami} we consider a special type of $b$-metrics, which have a specific description near the critical surface. These $b$-metrics allow a simple description of the corresponding $b$-contact forms and their exceptional Hamiltonians.

\begin{definition}\label{def:asymptoticallyexact}
    An \emph{asymptotically exact $b$-metric} is a $b$-metric $g$ on $(M,Z)$ which can be split in a tubular neighborhood $(P,z):\mathcal{N}(Z)\rightarrow Z\times (-\varepsilon,\varepsilon)$ into
    \begin{equation}\label{eq:splitmetric}
        g= P^*h + \frac{dz^2}{z^2},
    \end{equation}
    where $h$ is a smooth metric on $Z$. The space of asymptotically exact $b$-metrics of class $C^k$ on $(M,Z)$ is denoted by $\mathcal{G}^k_b$. In the neighborhood $\mathcal{N}(Z)$ it inherits the $C^k$-topology of the space of $C^k$ Riemannian metrics on $Z$ via the map $P$. With this topology it makes sense to speak of \textit{generic} metrics within the class $\mathcal{G}^k_b$.
\end{definition}

Asymptotically exact $b$-metrics are a special class of $b$-metrics first introduced by Melrose in \cite{M93}. We accordingly name the corresponding $b$-contact forms \textit{Melrose} $b$\textit{-contact forms}.

\begin{definition}\label{def:melrosebcontact}
    A \emph{Melrose $b$-contact form} is a $b$-contact form which is constructed from an asymptotically exact $b$-metric and a non-vanishing (as a section of ${}^{b}TM$) $b$-Beltrami vector field via the correspondence of Theorem \ref{thm:contactbeltrami} and Remark \ref{rm:contactfrombeltrami}.
\end{definition}

\begin{remark}\label{rem:bBeltramiexceptional}
    Because of the correspondence of Theorem \ref{thm:contactbeltrami}, we talk about Melrose $b$-contact forms and asymptotically exact $b$-metrics interchangeably when the $b$-Beltrami field is non-vanishing. It also makes sense to talk about the exceptional Hamiltonian (see Proposition \ref{prop:reebatz}) of a $b$-Beltrami vector field in this case. 
\end{remark} 

    An interesting property of Melrose $b$-contact forms is that, in coordinate charts for which the corresponding asymptotically exact $b$-metric splits as in Definition \ref{def:asymptoticallyexact}, the exceptional Hamiltonian is a component of the  $b$-Beltrami vector field. 

\begin{proposition}[\cite{MOP22}]
         Let $X$ be a $b$-Beltrami vector field for an asymptotically exact $b$-metric $g$ and let $(x,y,z)$ be coordinates near $Z$ for which the metric splits as in Definition \ref{def:asymptoticallyexact}. Then the corresponding exceptional Hamiltonian is $-X_z|_Z$, where
        \begin{equation*}
        X = X_x\frac{\partial}{\partial x} + X_y\frac{\partial}{\partial y}  + zX_z\frac{\partial}{\partial z}.
    \end{equation*}
\end{proposition}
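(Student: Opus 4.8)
The plan is to compute the $b$-contact form $\alpha = g(X,\cdot)$ explicitly in the splitting coordinates and then read off its Melrose decomposition $\alpha = f\frac{dz}{z}+\beta$, so that Proposition \ref{prop:reebatz} identifies the exceptional Hamiltonian as $-f|_Z$. First I would write the asymptotically exact $b$-metric near $Z$ as $g = P^*h + \frac{dz^2}{z^2}$, with $h = h_{11}\,dx^2 + 2h_{12}\,dx\,dy + h_{22}\,dy^2$ a smooth metric on $Z$ pulled back via $P$, and I would express the $b$-Beltrami field in the adapted frame $\{\partial_x,\partial_y, z\partial_z\}$ of ${}^bTM$ as $X = X_x\,\partial_x + X_y\,\partial_y + z X_z\,\partial_z$, where $X_x, X_y, X_z$ are smooth functions (this is the natural way to write a $b$-vector field, since $z\partial_z$ is the non-vanishing generator along $Z$). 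The dual $b$-coframe is $\{dx, dy, \frac{dz}{z}\}$, and in this coframe the $b$-metric acts on the $b$-vector field $z\partial_z$ only through the $\frac{dz^2}{z^2}$ term.

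The key computation is then the contraction. Contracting $X$ with $g$: the horizontal part $X_x\partial_x + X_y\partial_y$ paired against $P^*h$ yields a smooth $1$-form $\beta_0 := h_{1j}X^j\,dx + h_{2j}X^j\,dy$ on $Z$ (pulled back by $P$), which contributes only to the smooth part $\beta$; and the vertical part $zX_z\,\partial_z$ paired against $\frac{dz^2}{z^2} = \frac{dz}{z}\otimes\frac{dz}{z}$ gives $zX_z\cdot\frac{1}{z}\cdot\frac{dz}{z} = X_z\,\frac{dz}{z}$. Hence
\begin{equation*}
    \alpha = g(X,\cdot) = X_z\,\frac{dz}{z} + \beta_0,
\end{equation*}
which is exactly of the form in Lemma \ref{lemma:decomposition} with $f = X_z$ and $\beta = \beta_0 \in \Omega^1(M)$. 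Applying Proposition \ref{prop:reebatz} (after noting that $X$ is $b$-Reeb for $\alpha$ up to rescaling by Theorem \ref{thm:contactbeltrami} and Remark \ref{rm:contactfrombeltrami}, so up to the harmless rescaling the decomposition is the relevant one) gives that the exceptional Hamiltonian is $-f|_Z = -X_z|_Z$, as claimed.

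The main obstacle — really the only subtle point — is bookkeeping about the rescaling in Theorem \ref{thm:contactbeltrami}: $X$ is $b$-Reeb for $\alpha=g(X,\cdot)$ only after multiplying by a suitable positive function, so strictly speaking the exceptional Hamiltonian in Proposition \ref{prop:reebatz} is attached to the rescaled form. I would check that the rescaling factor is $1/|X|^2_g$ (so that $\iota_X(\text{rescaled }\alpha)=1$), that it is smooth and nonvanishing where $X$ is non-vanishing as a section of ${}^bTM$, and that multiplying $\alpha$ by such a function multiplies $f$ by the same function, whose restriction to $Z$ is positive; since the exceptional Hamiltonian only enters subsequent arguments through its critical points and the topology of its level sets, this positive rescaling is immaterial, and one may as well take the normalization in which $f = X_z$ on $Z$. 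A secondary point worth a sentence is that the splitting of $\alpha$ in Lemma \ref{lemma:decomposition} is only unique near $Z$, which is exactly the region where the computation takes place, so there is no ambiguity in speaking of "the" exceptional Hamiltonian.
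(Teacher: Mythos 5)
Your computation is correct and is essentially the argument behind the cited result in \cite{MOP22} (the present paper only quotes the proposition without reproducing its proof): in the splitting coordinates of Definition \ref{def:asymptoticallyexact}, contracting $X = X_x\partial_x + X_y\partial_y + zX_z\partial_z$ with $g = P^*h + \frac{dz^2}{z^2}$ gives $\alpha = g(X,\cdot) = X_z\,\frac{dz}{z} + \beta_0$ with $\beta_0$ a smooth $1$-form, so by Lemma \ref{lemma:decomposition} and Proposition \ref{prop:reebatz} the exceptional Hamiltonian is $-X_z|_Z$. One clarification on the point you flag as the only subtle one: in Theorem \ref{thm:contactbeltrami} and Remark \ref{rm:contactfrombeltrami} it is the \emph{vector field}, not the contact form, that is rescaled --- since $\iota_X d\alpha = \lambda\,\iota_X\iota_X\mu = 0$ and $\iota_X\alpha = |X|_g^2$, the Reeb field of $\alpha = g(X,\cdot)$ is $X/|X|_g^2$, while the exceptional Hamiltonian is by definition attached to $\alpha$ itself (and, per Remark \ref{rem:bBeltramiexceptional}, to $X$ through this very form); hence the decomposition $f = X_z$ already yields the statement and there is no normalization to dispose of. This matters because your fallback argument --- that multiplying the Hamiltonian by a positive function is immaterial since critical points and level-set topology are preserved --- is not valid in general (if $H' = \phi H$ with $\phi>0$, then $dH' = \phi\,dH + H\,d\phi$ need not vanish at critical points of $H$ where $H\neq 0$); fortunately it is not needed.
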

    Viewing the exceptional Hamiltonian as a component of an eigenfunction of the curl operator makes it possible to apply powerful results from spectral theory. Indeed, in \cite{MOP22} the authors prove that the exceptional Hamiltonian of a $b$-Beltrami field is an eigenfunction of the Laplacian $\Delta_h$ with respect to the metric $h$ on $Z$, where $h$ is the component of the asymptotically exact $b$-metric as in Definition \ref{def:asymptoticallyexact}. Thus, they obtain important properties of generic exceptional Hamiltonians using a theorem by Uhlenbeck in \cite{U76}. The summarized result is the following, and the meaning of \textit{generic exceptional Hamiltonian} is explained in the remark after it.

\begin{theorem}[\cite{MOP22}]\label{lemma:splitting}\label{thm:hammorse}
    There exists a residual set $\widehat{\mathcal{G}^k_b}\subset\mathcal{G}^k_b$ in the space of asymptotically exact $b$-metrics such that if $g\in \widehat{\mathcal{G}^k_b}$, then in local coordinates as in Definition \ref{def:asymptoticallyexact}, the function $-X_z|_Z$ is a Morse function on $Z$, and zero is a regular value, provided that it is not locally constant.
\end{theorem}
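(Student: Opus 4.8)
The plan is to reduce the statement to Uhlenbeck's genericity theorem for Laplace eigenfunctions \cite{U76}, after transporting the problem from $\mathcal{G}^k_b$ to the closed surface $Z$ itself. The first step is to identify the parameter space. By Definition \ref{def:asymptoticallyexact}, the $C^k$-topology on $\mathcal{G}^k_b$ is, by construction, pulled back via the assignment $g\mapsto h$ of the splitting \eqref{eq:splitmetric} from the $C^k$-topology on the space $\mathrm{Met}^k(Z)$ of $C^k$ Riemannian metrics on $Z$; moreover this assignment is a continuous open surjection, since a metric $h$ on $Z$ already determines an asymptotically exact $b$-metric on a tubular neighbourhood $\mathcal{N}(Z)$ and the behaviour of $g$ away from $\mathcal{N}(Z)$ is irrelevant both to the topology and to the function $-X_z|_Z$. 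Consequently the preimage of a residual subset of $\mathrm{Met}^k(Z)$ is residual in $\mathcal{G}^k_b$, and it suffices to produce a residual set of metrics $h$ on $Z$ for which the conclusion holds.

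The second step uses the fact recalled just before the statement (and established in \cite{MOP22}) that, in coordinates as in Definition \ref{def:asymptoticallyexact}, the exceptional Hamiltonian $u:=-X_z|_Z$ of a $b$-Beltrami field $X$ for $g$ is an eigenfunction of $\Delta_h$, say $\Delta_h u=\mu u$. If $u$ is not locally constant then $\mu\neq 0$ and $u$ is a genuine non-constant eigenfunction; the excluded locally constant case is exactly $u\in\ker\Delta_h$ (the locally constant functions on $Z$, i.e.\ $\mu=0$), and such $u$ are never Morse, which is precisely why the statement puts that hypothesis aside. The eigenvalue $\mu$ carried by $u$ plays no role: it is enough to arrange that \emph{all} eigenfunctions of $\Delta_h$, over the \emph{whole} spectrum, are simultaneously well behaved. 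This is also why one residual set serves \emph{every} $b$-Beltrami field at once, since for fixed $g$ the $b$-Beltrami fields form a countable union of finite-dimensional curl-eigenspaces and $X\mapsto -X_z|_Z$ sends each of them into the eigenfunctions of $\Delta_h$, all of which Uhlenbeck's conclusion addresses.

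The third step is Uhlenbeck's theorem \cite{U76}: there is a residual set $\mathcal{R}\subset\mathrm{Met}^k(Z)$ such that, for every $h\in\mathcal{R}$, the spectrum of $\Delta_h$ is simple and every eigenfunction is a Morse function on $Z$ with $0$ as a regular value. One point deserves emphasis, since it explains why the two clauses of the conclusion are not the same: the Morse property alone does not give ``$0$ is a regular value'', because at a critical point $p$ of $u$ lying on the nodal set $\{u=0\}$ the equation $\Delta_h u=\mu u$ forces $\operatorname{tr}\Hess_p u=0$, so under the Morse hypothesis $p$ would be a non-degenerate saddle — hence Morseness does not by itself prevent the nodal set from containing critical points. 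Excluding such points is an additional transversality statement of exactly the kind carried out in \cite{U76}: the locus $\{\,du=0,\ u=0\,\}$ is cut out by three equations on the two-dimensional $Z$, so it is empty for a residual set of metrics. Intersecting the resulting residual sets, pulling back to $\widehat{\mathcal{G}^k_b}\subset\mathcal{G}^k_b$, and specializing to $u=-X_z|_Z$ gives the claim.

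Beyond quoting \cite{U76} and \cite{MOP22}, the only genuine work lies in this third step: invoking Uhlenbeck's result in the precise form needed — on a closed surface, uniformly over the whole spectrum, and with the ``$0$ is a regular value'' conclusion and not merely the Morse property — and verifying that the ``not locally constant'' hypothesis is exactly what places $u$ in the non-degenerate regime where the genericity applies. The first two steps are essentially formal once one grants the identification $\mathcal{G}^k_b\leftrightarrow\mathrm{Met}^k(Z)$ of parameter spaces and the fact that the exceptional Hamiltonian is a Laplace eigenfunction; I expect no analytic difficulty there.
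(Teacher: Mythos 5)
Your proposal follows essentially the same route as the paper, which states this result as a citation of \cite{MOP22} and explains exactly this strategy: the exceptional Hamiltonian $-X_z|_Z$ is an eigenfunction of $\Delta_h$ on $Z$, the topology on $\mathcal{G}^k_b$ is the one induced from metrics $h$ on $Z$, and Uhlenbeck's genericity theorem \cite{U76} then yields a residual set of metrics for which all eigenfunctions are Morse with $0$ a regular value. The only small imprecision is your identification of the ``locally constant'' exclusion with $\mu=0$: since $\Delta_h u=\lambda^2 u$ with $\lambda\neq 0$, that hypothesis really rules out $u$ vanishing identically on a component of $Z$, but this does not affect the argument.
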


\begin{remark} \label{rem:residual}
    Here, \textit{residual} means that $\widehat{\mathcal{G}^k_b}$ is a countable intersection of open and dense subsets of $\mathcal{G}^k_b$, as in Uhlenbeck's Theorem \cite{U76}. In particular, $\widehat{\mathcal{G}^k_b}$ is dense in $\mathcal{G}^k_b$. When we say \textit{generic exceptional Hamiltonian}, \textit{generic $b$-Beltrami vector field} and \textit{generic Melrose $b$-contact form}, we mean that the underlying metric $g$ is in the residual set $\widehat{\mathcal{G}^k_b}$.
\end{remark}

\begin{remark}\label{rem:derivatives}
    In the proof of Theorem \ref{thm:hammorse}, a computation also yields an expression for the components $X_x$ and $X_y$ on $Z$ of a $b$-Beltrami field $X$ for an asymptotically exact $b$-metric in terms of $-X_z|_Z$  (see \cite{MOP22}, Proposition 3.3):
    \begin{equation} \label{eq:beltramiexpression}
        \begin{cases}
            X_x|_Z = \frac{-1}{\lambda\sqrt{\det h}}\frac{\partial (-X_z|_Z)}{\partial y}\\
            X_y|_Z = \frac{1}{\lambda\sqrt{\det h}}\frac{\partial (-X_z|_Z)}{\partial x},
        \end{cases}
    \end{equation}
    where $\lambda$ is the eigenvalue of $X$, that is $\curl X = \lambda X$. In fact, Theorem \ref{thm:hammorse} does not require that the $b$-Beltrami field is non-vanishing as a section of ${}^bTM$.
    Nevertheless, we observe that $X|_Z$ is a Hamiltonian vector field whose Hamiltonian function is precisely $-X_z|_Z$, and the symplectic form on $Z$ is simply $\lambda$ multiplied by the Riemannian area form associated to the metric $h$.
\end{remark}



\textcolor{black}{Theorem \ref{thm:hammorse} is used in \cite{MOP22} to study the dynamics generated by generic Melrose $b$-contact forms through an analysis of the linear stability of the critical points of the associated exceptional Hamiltonian. However, as we will see in the next section, the linear stability study around critical points of $b$-Reeb vector fields can be performed by more elementary means, without making use of an auxiliary  asymptotically exact $b$-metric, Uhlenbeck's result or the correspondence with $b$-Beltrami fields.}


\section{At least $2N$ or infinitely many escape orbits}\label{sec:mainresult}

In this section we prove our first main theorem. It is a stronger version of \cite[Theorem 1.3]{MOP22}, and it follows from a detailed analysis of the critical points of a $b$-Reeb vector field on the critical set $Z$ and elementary Morse theory. In contrast with~\cite[Theorem 1.3]{MOP22} we do not refer to any compatible asymptotically exact $b$-metric. 

\begin{theorem}\label{thm:main}
	Let $\alpha$ be a $b$-contact form on a $3$-dimensional manifold $(M,Z)$ without boundary, with $Z$ a closed embedded surface in $M$.
    Then there exists a $b$-contact form $C^\infty$-close to $\alpha$, such that the associated $b$-Reeb vector field has either
	\begin{enumerate}[(i)]
		\item infinitely many escape orbits if $b_1(Z)>0$, or
		\item at least $2N$ escape orbits if $b_1(Z) = 0$, where $N$ is the number of connected components of $Z$. 
	\end{enumerate}
 Moreover, the set of $b$-contact forms exhibiting these properties is open in the $C^\infty$-topology.
\end{theorem}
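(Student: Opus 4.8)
The plan is to reduce the statement to elementary Morse theory on the critical surface, via Proposition~\ref{prop:reebatz}. Write $\alpha = f\tfrac{dz}{z}+\beta$ in a tubular neighbourhood of $Z$; by that proposition $R_\alpha|_Z$ is the Hamiltonian vector field of $h:=-f|_Z$ for an area form $\omega|_Z$ on $Z$, so in particular $Z$ is orientable. First I would set up the perturbation: for a function $\phi$ supported near $Z$, the $b$-form $\alpha_t:=\alpha+t\,\phi\,\tfrac{dz}{z}$ keeps $Z$ as its critical set, remains $b$-contact for $|t|$ small (the condition $\alpha\wedge d\alpha\neq0$ being $C^1$-open), and has exceptional Hamiltonian $h-t\,\phi|_Z$. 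As $\phi|_Z$ is arbitrary, an arbitrarily $C^\infty$-small perturbation makes $h$ a Morse function on $Z$; and since ``$h$ Morse'' is a $C^\infty$-open condition on the $b$-contact form (the map $\alpha\mapsto h_\alpha=-(\iota_{z\partial_z}\alpha)|_Z$ being $C^\infty$-continuous), proving the escape-orbit bound for every such $\alpha$ also yields the closing openness statement.

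The heart of the proof is the local analysis at the critical points of $h$. Away from $Z$, $R_\alpha$ is an ordinary Reeb field, hence nowhere zero; on $Z$ it vanishes as a section of $TM$ precisely at the critical points of $h$. Fixing such a $p$ and coordinates $(x,y,z)$ with $Z=\{z=0\}$, so $R_\alpha=R_x\partial_x+R_y\partial_y+(zR_z)\partial_z$, I would establish two facts: (a) $DR_\alpha(p)$ is block upper triangular, with a $2\times2$ block — the linearisation of $R_\alpha|_Z$ at $p$, which, as $R_\alpha|_Z$ is the Hamiltonian field of the Morse function $h$, has a conjugate pair of nonzero imaginary eigenvalues when $p$ is an extremum of $h$ and a pair of real nonzero eigenvalues $\pm\mu$ when $p$ is a saddle — together with a $1\times1$ block $[R_z(p)]$; and (b) evaluating the Reeb identity $\iota_{R_\alpha}\alpha=1$ at $p$, where $R_\alpha(p)=R_z(p)(z\partial_z)|_p$ is nonzero in ${}^bT_pM$, yields $f(p)R_z(p)=1$, so that $R_z(p)\neq0$ automatically, without any further genericity assumption. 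Combining (a), (b) and the Morse condition: at a saddle $DR_\alpha(p)$ has three real nonzero eigenvalues, so $p$ is a hyperbolic zero whose $2$-dimensional (un)stable manifold is transverse to $Z$; at an extremum $DR_\alpha(p)$ has a conjugate pair of nonzero imaginary eigenvalues together with $R_z(p)\neq0$, so by the (strong) (un)stable manifold theorem $p$ carries a $1$-dimensional invariant manifold transverse to $Z$ whose orbits converge to $p$, with $Z$ itself serving as a center manifold.

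Then I would read off and count escape orbits. The two arcs of the $1$-dimensional strong (un)stable manifold at an extremum $p$ lie off $Z$ and have a semiorbit converging to the stationary point $p\in Z$, hence are escape orbits; since a convergent semiorbit has a unique limit, distinct extrema contribute distinct orbits, and even in the worst case an escape orbit limits to at most two critical points, which does not affect the bound. At a saddle $p$, the $2$-dimensional (un)stable manifold meets $Z$ only along the $1$-dimensional (un)stable curve of $R_\alpha|_Z$ at $p$, so its complement there is a continuum of orbits off $Z$, each with a semiorbit converging to $p$ — infinitely many escape orbits. Finally I would invoke Morse theory on the closed orientable surface $Z$: the Morse inequalities give at least $b_1(Z)$ critical points of index $1$, and at least one of index $0$ and one of index $2$ on each connected component. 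Hence $b_1(Z)>0$ forces a saddle and therefore infinitely many escape orbits, while if $b_1(Z)=0$ each of the $N$ components is a $2$-sphere (being closed, orientable, with $b_1=0$) carrying at least two extrema, hence at least $2N$ escape orbits.

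The crux, and essentially the only non-formal step, is the identity $R_z(p)=1/f(p)$ in (b): it is precisely what neutralises the non-hyperbolic normal direction at the extrema and lets one dispense with the auxiliary asymptotically exact $b$-metric, Uhlenbeck's theorem and the $b$-Beltrami correspondence used in \cite{MOP22}. The remaining ingredients — realising a Morse exceptional Hamiltonian by a $C^\infty$-small perturbation, the invariant-manifold theory at these partially hyperbolic rest points, and the Morse-theoretic count — are standard, though some care is needed to keep the perturbation inside the space of $b$-contact forms with the same critical set and to verify that the relevant (un)stable manifolds really are transverse to $Z$.
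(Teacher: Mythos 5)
Your proposal is correct and follows essentially the same route as the paper's proof: perturb the coefficient of $\frac{dz}{z}$ to make the exceptional Hamiltonian $-f|_Z$ Morse, linearise the $b$-Reeb field at its zeros on $Z$ using Proposition \ref{prop:reebatz}, derive the normal eigenvalue's nonvanishing from the Reeb identity (your $f(p)R_z(p)=1$ is exactly the paper's $f(p)g(p)=1$), and conclude via stable/unstable/center manifolds and the Morse inequalities. The case analysis (saddle versus extremum), the counting of escape orbits, and the openness argument all coincide with the paper's.
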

Here $b_1(Z)$ denotes the first Betti number of the critical surface $Z$. We discuss what happens when $M$ has boundary in Remark \ref{rem:boundary} below.
\begin{remark}
\textcolor{black}{The $C^\infty$-topology on the space of $b$-forms is defined as follows. Away from a tubular neighbourhood of $Z$, the topology coincides with the usual one, as $b$-forms are smooth differential forms away from $Z$. Around the critical set, given a $b$-form $\omega$ and its decomposition as in Lemma \ref{lemma:decomposition}, that is $\omega=\alpha \wedge \frac{dz}{z}+\beta$, we define a smooth form in the tubular neighbourhood by $\overline{\omega}:=\alpha \wedge dz+\beta$. We say that in the tubular neighbourhood two $b$-forms $\omega_1, \omega_2$ are $C^\infty$-close if $\overline{\omega_1}, \overline{\omega_2}$ are $C^\infty$-close in the sense of smooth differential forms defined on this neighbourhood.}
\end{remark}

\begin{proof}[Proof of Theorem \ref{thm:main}]
    The proof follows essentially the proof of \cite[Theorem 1.3]{MOP22}, without making use of the contact-Beltrami correspondence nor spectral geometry techniques.

    As before, a $b$-contact form in a tubular neighbourhood around the critical set $Z$, denoted by $\mathcal{N}(Z)$, is given by
    $$\alpha=f\frac{dz}{z}+\beta,$$
    where $f\in C^\infty(\mathcal{N}(Z))$ and $\beta\in \Omega^1(\mathcal{N}(Z))$.

    By a $C^\infty$-small perturbation, we can assume that the function $f$ in the above decomposition for  $\alpha$ restricts to a Morse function on $Z$. Indeed, we choose a $b$-contact form that is $C^\infty$-close to $\alpha$ as
    \begin{equation}
        \widetilde{\alpha}:=(f+\epsilon h) \frac{dz}{z}+\beta,
    \end{equation}

    where $h$ is a $C^\infty$-small function, which is supported in a tubular neighbourhood of the critical set, such that $(f+\epsilon h)|_Z$ is a Morse function. It is clear that this is still a $b$-contact form (as this is an open condition) if $\epsilon$ is small enough. The reason for perturbing $f$ to become Morse will come apparent from the analysis carried out around a tubular neighbourhood of the associated $b$-Reeb vector field. We will thus assume from now on that $\alpha$ satisfies this condition.
    
    The associated $b$-Reeb vector field in this tubular neighbourhood is given by
    $$R=gz\frac{\partial}{\partial z}+Y,$$
    where $g\in C^\infty(\mathcal{N}(Z))$ and $Y\in \mathfrak{X}(\mathcal{N}(Z))$ such that $\iota_Y(dz)=0$.

    By Proposition \ref{prop:reebatz}, the restriction of the smooth $2$-form 
    $$\omega:=fd\beta+\beta \wedge df$$
    to $Z$ is symplectic. This implies that at a critical point $p\in Z$ of $f|_Z$, $f(p)\neq 0$. In other words, $0$ is a regular value of $f|_Z$.

    By the same proposition, $R|_Z$ is a Hamiltonian vector field with respect to $\omega|_Z$, and the exceptional Hamiltonian is given by $H:=-f|_Z$. We denote this Hamiltonian vector field by
    $$\overline{R}:=R|_Z=Y|_Z.$$

    It follows that at a critical point $p\in Z$ of  $H$ we have a zero of the $b$-Reeb vector field, that is, $R_p=\overline{R}_p=0$. Thus, by the assumption that $Z$ is closed, $R$ admits at least two zeroes (corresponding to the maximum and minimum values of $H$). Furthermore, at a critical point $p\in Z$, we have $g(p)\neq 0$ because the Reeb condition $\alpha(R)=1$ yields
$$1=\alpha(R)|_p=f(p)g(p)+\beta_p(Y_p)=f(p)g(p).$$
    We now study the linear stability of $R$ around the critical points. At a critical point $p$, the differential of $R$ is given by
$$DR(p)=\left.\begin{pmatrix} D\overline{R} & * \\ 0 & g \end{pmatrix}\right|_p.$$
    We choose a Darboux chart around the critical point in $Z$, so that in local coordinates with $\omega=-dx\wedge dy$ we obtain
    \begin{equation*}
    DR(p)=\left.\begin{pmatrix} H_{xy} & H_{yy}& *\\-H_{xx} &-H_{xy} & * \\ 0 & 0& g \end{pmatrix}\right| _p.
  \end{equation*}
    It is now easy to determine the linear stability at $p$ by looking at the eigenvalues of this matrix. The eigenvalues are $\lambda_+,\lambda_-$ and $\lambda_z$, where $\lambda_+$ and $\lambda_-$ are eigenvalues of the first $2\times 2$ minor,
    \begin{equation*}
        \lambda_\pm = \pm\sqrt{-\Hess H(p)},
    \end{equation*}
    and $\lambda_z=g(p)\neq 0$. Notice that $\lambda_\pm\neq 0$ because we assume that $f|_Z$ (and hence $H$) is a Morse function.
    
    There are two situations to consider, according to the sign of $\Hess H(p)$:

\begin{itemize}
    
    \item $\Hess H(p) < 0$: In this case, the critical point of $R$ is hyperbolic and there is a two-dimensional stable or unstable (depending on the sign of $g(p)$) manifold at $p$ that is transverse to $Z$.
    
    \item $\Hess H(p) > 0$:  In this case, the critical point of $R$ is non-hyperbolic and there is a one-dimensional stable or unstable (depending on the sign of $g(p)$) manifold at $p$ that is transverse to $Z$, the center manifold being $Z$.

\end{itemize}

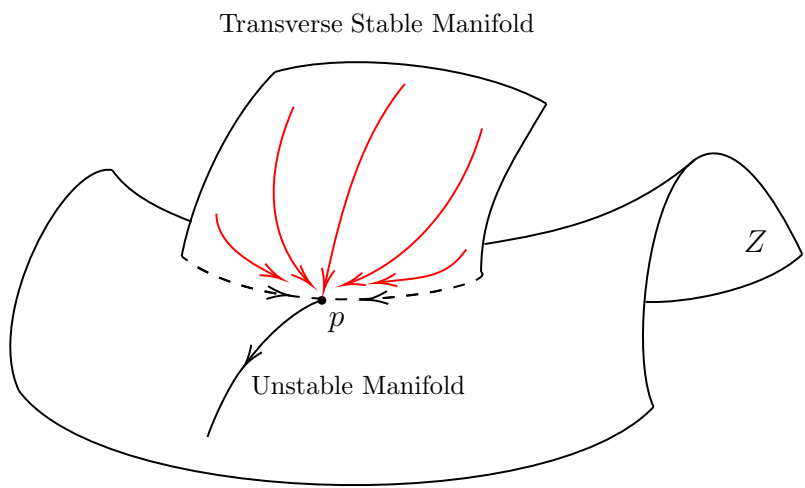
\begin{figure}[H]
	\centering
 
\tikzset{every picture/.style={line width=0.75pt}} 

\begin{tikzpicture}[x=0.75pt,y=0.75pt,yscale=-1,xscale=1]

\draw    (124,194.5) .. controls (168,253) and (390,259.5) .. (444,202.5) ;
\draw    (444,202.5) .. controls (426,164.5) and (454,-10.5) .. (519,125.5) ;
\draw    (440,149.5) .. controls (459,150.5) and (500,143.5) .. (519,125.5) ;
\draw    (124,194.5) .. controls (106,156.5) and (147,78.5) .. (171,83) ;
\draw  [fill={rgb, 255:red, 0; green, 0; blue, 0 }  ,fill opacity=1 ] (275,148.71) .. controls (275,147.77) and (275.77,147) .. (276.71,147) .. controls (277.66,147) and (278.43,147.77) .. (278.43,148.71) .. controls (278.43,149.66) and (277.66,150.43) .. (276.71,150.43) .. controls (275.77,150.43) and (275,149.66) .. (275,148.71) -- cycle ;
\draw    (206,126.5) .. controls (211,99.5) and (228,58.5) .. (253,33.5) ;
\draw    (253,33.5) .. controls (290,22.5) and (358,30.5) .. (390,49.5) ;
\draw    (357,134.5) .. controls (357,101.5) and (371,81.5) .. (390,49.5) ;
\draw  [dash pattern={on 4.5pt off 4.5pt}]  (357,134.5) .. controls (368,141.5) and (257,165.5) .. (206,126.5) ;
\draw [color={rgb, 255:red, 255; green, 0; blue, 0 }  ,draw opacity=1 ]   (318.5,39.5) .. controls (296.83,66.1) and (286.96,99.64) .. (278.07,141.74) ;
\draw [shift={(277.67,143.67)}, rotate = 281.82] [color={rgb, 255:red, 255; green, 0; blue, 0 }  ,draw opacity=1 ][line width=0.75]    (10.93,-3.29) .. controls (6.95,-1.4) and (3.31,-0.3) .. (0,0) .. controls (3.31,0.3) and (6.95,1.4) .. (10.93,3.29)   ;
\draw  [dash pattern={on 4.5pt off 4.5pt}]  (214,132.17) .. controls (223.05,137.02) and (238.07,143.12) .. (258.12,145.92) ;
\draw [shift={(260,146.17)}, rotate = 187.24] [color={rgb, 255:red, 0; green, 0; blue, 0 }  ][line width=0.75]    (10.93,-3.29) .. controls (6.95,-1.4) and (3.31,-0.3) .. (0,0) .. controls (3.31,0.3) and (6.95,1.4) .. (10.93,3.29)   ;
\draw  [dash pattern={on 4.5pt off 4.5pt}]  (337,143.5) .. controls (330.28,145.42) and (315.57,146.73) .. (300.84,148.3) ;
\draw [shift={(299,148.5)}, rotate = 353.8] [color={rgb, 255:red, 0; green, 0; blue, 0 }  ][line width=0.75]    (10.93,-3.29) .. controls (6.95,-1.4) and (3.31,-0.3) .. (0,0) .. controls (3.31,0.3) and (6.95,1.4) .. (10.93,3.29)   ;
\draw    (171,83) .. controls (178.33,93.67) and (191.67,101.67) .. (211,109) ;
\draw    (358.73,120.33) .. controls (396.07,114.33) and (430.13,106.97) .. (464.8,77.8) ;
\draw    (276.71,148.71) .. controls (268.91,150.66) and (252.35,161.44) .. (238.56,180.52) ;
\draw [shift={(237.5,182)}, rotate = 304.99] [color={rgb, 255:red, 0; green, 0; blue, 0 }  ][line width=0.75]    (10.93,-3.29) .. controls (6.95,-1.4) and (3.31,-0.3) .. (0,0) .. controls (3.31,0.3) and (6.95,1.4) .. (10.93,3.29)   ;
\draw [color={rgb, 255:red, 255; green, 0; blue, 0 }  ,draw opacity=1 ]   (262.5,51) .. controls (246.82,86.28) and (249.39,118.68) .. (270.2,139.73) ;
\draw [shift={(271.5,141)}, rotate = 223.67] [color={rgb, 255:red, 255; green, 0; blue, 0 }  ,draw opacity=1 ][line width=0.75]    (10.93,-3.29) .. controls (6.95,-1.4) and (3.31,-0.3) .. (0,0) .. controls (3.31,0.3) and (6.95,1.4) .. (10.93,3.29)   ;
\draw [color={rgb, 255:red, 255; green, 0; blue, 0 }  ,draw opacity=1 ]   (357.5,62) .. controls (344.83,106.85) and (313.14,132.69) .. (289.32,140.44) ;
\draw [shift={(287.5,141)}, rotate = 343.74] [color={rgb, 255:red, 255; green, 0; blue, 0 }  ,draw opacity=1 ][line width=0.75]    (10.93,-3.29) .. controls (6.95,-1.4) and (3.31,-0.3) .. (0,0) .. controls (3.31,0.3) and (6.95,1.4) .. (10.93,3.29)   ;
\draw [color={rgb, 255:red, 255; green, 1; blue, 1 }  ,draw opacity=1 ]   (223.5,105) .. controls (223.5,121.24) and (242.65,131.09) .. (254.83,137.17) ;
\draw [shift={(256.5,138)}, rotate = 206.57] [color={rgb, 255:red, 255; green, 1; blue, 1 }  ,draw opacity=1 ][line width=0.75]    (10.93,-3.29) .. controls (6.95,-1.4) and (3.31,-0.3) .. (0,0) .. controls (3.31,0.3) and (6.95,1.4) .. (10.93,3.29)   ;
\draw [color={rgb, 255:red, 253; green, 0; blue, 0 }  ,draw opacity=1 ]   (349.5,123) .. controls (338.83,138.52) and (325.34,137.1) .. (305.37,139.74) ;
\draw [shift={(303.5,140)}, rotate = 351.87] [color={rgb, 255:red, 253; green, 0; blue, 0 }  ,draw opacity=1 ][line width=0.75]    (10.93,-3.29) .. controls (6.95,-1.4) and (3.31,-0.3) .. (0,0) .. controls (3.31,0.3) and (6.95,1.4) .. (10.93,3.29)   ;
\draw    (219,217.67) .. controls (224.33,202.67) and (232.67,187.33) .. (237.5,182) ;

\draw (278.71,153.43) node [anchor=north west][inner sep=0.75pt]   [align=left] {$\displaystyle p$};
\draw (488,112) node [anchor=north west][inner sep=0.75pt]   [align=left] {$\displaystyle Z$};
\draw (223.33,2.33) node [anchor=north west][inner sep=0.75pt]  [font=\footnotesize] [align=left] {{\footnotesize Transverse Stable Manifold}};
\draw (239.5,185) node [anchor=north west][inner sep=0.75pt]  [font=\footnotesize] [align=left] {{\footnotesize  Unstable Manifold}};

\end{tikzpicture}

	\caption{Example of case $\Hess H(p)<0$ and $g(p)<0$, so there is a transverse 2-dimensional stable manifold containing infinitely many escape orbits, which are colored red.}\label{fig:figure}
\end{figure}

When the transverse invariant manifold is of dimension two, all of the orbits lying within it (of which there are infinitely many) are escape orbits with limit point $p$ (see Figure \ref{fig:figure}). A transverse invariant manifold of dimension one guarantees exactly two escape orbits (one on each side of $Z$) with limit point $p$. 

Let $C_k$ be the number of critical points of $H$ of index $k$ on $Z$ and $b_k$ the $k$-th Betti number. We will use  the Morse inequality
\begin{equation}\label{eq:morse1}
	C_k \geq b_k(Z)	
\end{equation}
to conclude the proof.

\emph{Case $b_1(Z) > 0$.} In this case there is at least one critical point of $H$ of index one (in fact, at least two, because the first Betti number is even), so there is a saddle point and therefore infinitely many escape orbits.

\emph{Case $b_1(Z) = 0$.} This corresponds to $Z$ consisting of $N\geq 1$ disjoint surfaces all diffeomorphic to $\mathbb S^2$. In this case, there are least two escape orbits for each critical point (one escape orbit on each side of the corresponding sphere), some of which may coincide to form singular periodic orbits. In any case, since there are at least $2N$ critical points there must be at least $2N$ distinct escape orbits. Note that it can still be that the exceptional Hamiltonian has a saddle point on $Z$, in which case there would be infinitely many escape orbits.

Finally, notice than being Morse and the sign conditions presented above are open in the $C^\infty$-topology, so we conclude that the set of $b$-forms for which the theorem applies is not only dense, but also open. This completes the proof.
\end{proof}

We conclude this section with a series of comments.

\begin{remark}
   As already mentioned in the introduction, in the context of $b$-Reeb fields the above result is significantly stronger than the main result stated in \cite{MOP22}: In the above result, we just need to do a $C^\infty$-small perturbation of the $b$-contact form to ensure that the exceptional Hamiltonian is a Morse function, whereas in \cite{MOP22}, it was assumed that the $b$-contact form admits a compatible asymptotically exact $b$-metric. We remark that a $b$-contact form does not generally admit an asymptotically exact $b$-metric. Indeed,
    if the $b$-contact form has a positive exceptional Hamiltonian, then it does not admit a compatible asymptotically exact $b$-metric because the exceptional Hamiltonian cannot be an eigenfunction of the Laplacian for any metric (as non-constant eigenfunctions always have zero mean).
\end{remark}

\begin{remark}\label{rem:boundary}
    When the critical set $Z$ has some components which are on the boundary of $M$, for the case $b_1(Z)=0$ one loses an escape orbit at each critical point on the boundary (the escape orbit that would be on the outside of $M$). Point $(ii)$ of Theorem \ref{thm:main} would then read \textit{at least $N_1 + 2N_2$ escape orbits if $b_1(Z) = 0$, where $N_1$ and $N_2$ are the number of connected components of $Z$ on the boundary and the interior of $M$ respectively}.
\end{remark}

\begin{remark}
We can also be a bit more precise in counting the number of escape orbits of Theorem \ref{thm:main} if we introduce multiplicities to distinguish singular periodic orbits from escape orbits which are not singular periodic orbits. Indeed, in the proof of Theorem \ref{thm:main}, we observed that, if there are no saddle points, then there are exactly two escape orbits for each critical point and exactly two critical points for each of the $N$ connected components of $Z$ (if there are more, there must be at least one saddle point). However, since some of these escape orbits can coincide to form singular periodic orbits, we can only guarantee the existence of $2N$ \textit{distinct} escape orbits. This motivates introducing the following definition.
\begin{definition}
    A \textit{one-way escape orbit} is an escape orbit which is not a singular periodic orbit.
\end{definition}
Now, if in the proof of Theorem \ref{thm:main} we count one-way escape orbits with multiplicity one and singular periodic orbits with multiplicity two, each connected component of the critical set $Z$ will contribute exactly four escape orbits (with multiplicity) if there are no saddle points, as shown in Figure \ref{fig:multiplicities}, or else infinitely many if there is a saddle point. We remark that there cannot exist homoclinic orbits at the non-hyperbolic critical points because of the center manifold theorem. Indeed, the dynamics on the center manifold (the critical set $Z$) is Hamiltonian, so it is periodic in a neighborhood of any critical point $p$ with Morse index $0$ or $2$; then the only way to approach $p$ is tangentially to the stable (or unstable) one-dimensional manifold. We then conclude that there is only one orbit (on each side of $Z$) whose $\omega$-limit (or $\alpha$-limit) is $p$, which prevents from the existence of homoclinic trajectories. Notice that such homoclinic orbits may exist associated to the saddle critical points on $Z$.

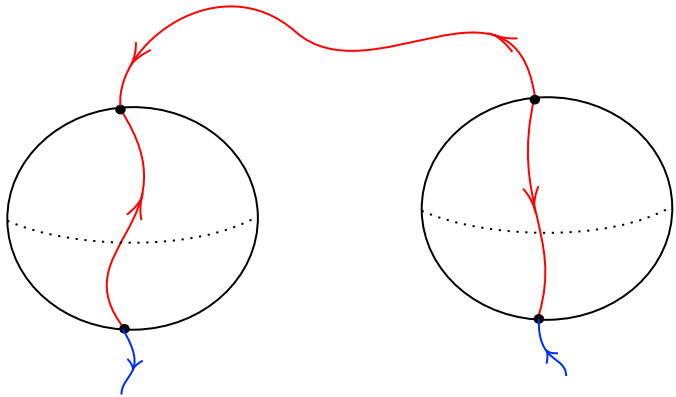
\begin{figure}[H]
    \centering
    \tikzset{every picture/.style={line width=0.75pt}} 

\begin{tikzpicture}[x=0.75pt,y=0.75pt,yscale=-1,xscale=1]

\draw [color={rgb, 255:red, 255; green, 0; blue, 0 }  ,draw opacity=1 ]   (174.71,100.26) .. controls (212.77,158.28) and (144.51,169.77) .. (176.72,210.95) ;
\draw  [color={rgb, 255:red, 255; green, 0; blue, 0 }  ,draw opacity=1 ] (177.95,153.29) .. controls (181.26,150.79) and (183.76,147.97) .. (185.43,144.84) .. controls (184.57,148.28) and (184.52,152.05) .. (185.28,156.13) ;
\draw  [color={rgb, 255:red, 0; green, 0; blue, 0 }  ,draw opacity=1 ] (117.65,156.36) .. controls (117.09,125.34) and (144.92,99.68) .. (179.81,99.05) .. controls (214.7,98.41) and (243.44,123.05) .. (244,154.07) .. controls (244.57,185.08) and (216.74,210.74) .. (181.85,211.38) .. controls (146.96,212.01) and (118.22,187.38) .. (117.65,156.36) -- cycle ;
\draw [color={rgb, 255:red, 0; green, 0; blue, 0 }  ,draw opacity=1 ] [dash pattern={on 0.84pt off 2.51pt}]  (117.65,156.36) .. controls (149.51,170.66) and (212.74,172.53) .. (244,154.07) ;

\draw  [fill={rgb, 255:red, 0; green, 0; blue, 0 }  ,fill opacity=1 ] (174.58,210.99) .. controls (174.56,209.91) and (175.51,209.02) .. (176.69,209) .. controls (177.87,208.98) and (178.84,209.83) .. (178.86,210.91) .. controls (178.88,211.99) and (177.94,212.88) .. (176.76,212.9) .. controls (175.58,212.92) and (174.6,212.07) .. (174.58,210.99) -- cycle ;
\draw  [fill={rgb, 255:red, 0; green, 0; blue, 0 }  ,fill opacity=1 ] (172.57,100.3) .. controls (172.55,99.22) and (173.5,98.33) .. (174.68,98.31) .. controls (175.86,98.29) and (176.84,99.14) .. (176.85,100.22) .. controls (176.87,101.3) and (175.93,102.19) .. (174.75,102.21) .. controls (173.57,102.23) and (172.59,101.38) .. (172.57,100.3) -- cycle ;
\draw [color={rgb, 255:red, 255; green, 0; blue, 4 }  ,draw opacity=1 ]   (174.68,98.31) .. controls (172.15,67.21) and (224.83,27.34) .. (263,61) .. controls (301.17,94.66) and (373,23.75) .. (383.68,93.31) ;
\draw  [color={rgb, 255:red, 255; green, 0; blue, 4 }  ,draw opacity=1 ] (190.02,70.21) .. controls (185.92,72.15) and (182.65,74.53) .. (180.22,77.35) .. controls (181.82,74.09) and (182.58,70.39) .. (182.5,66.25) ;
\draw [color={rgb, 255:red, 0; green, 42; blue, 255 }  ,draw opacity=1 ]   (175.82,211.03) .. controls (189.78,234.13) and (175.03,233.59) .. (175.22,244.12) ;
\draw  [color={rgb, 255:red, 0; green, 42; blue, 255 }  ,draw opacity=1 ] (185.97,227.02) .. controls (183.45,228.34) and (181.8,229.77) .. (181.01,231.31) .. controls (180.93,229.67) and (179.99,227.94) .. (178.17,226.11) ;
\draw [color={rgb, 255:red, 255; green, 0; blue, 0 }  ,draw opacity=1 ]   (383.68,93.31) .. controls (371,147.25) and (398,163) .. (385.69,204) ;
\draw  [color={rgb, 255:red, 255; green, 0; blue, 0 }  ,draw opacity=1 ] (385.53,138.91) .. controls (383.95,142.75) and (383.24,146.45) .. (383.39,149.99) .. controls (382.38,146.59) and (380.51,143.32) .. (377.78,140.2) ;
\draw  [color={rgb, 255:red, 0; green, 0; blue, 0 }  ,draw opacity=1 ] (326.65,151.36) .. controls (326.09,120.34) and (353.92,94.68) .. (388.81,94.05) .. controls (423.7,93.41) and (452.44,118.05) .. (453,149.07) .. controls (453.57,180.08) and (425.74,205.74) .. (390.85,206.38) .. controls (355.96,207.01) and (327.22,182.38) .. (326.65,151.36) -- cycle ;
\draw [color={rgb, 255:red, 0; green, 0; blue, 0 }  ,draw opacity=1 ] [dash pattern={on 0.84pt off 2.51pt}]  (326.65,151.36) .. controls (358.51,165.66) and (421.74,167.53) .. (453,149.07) ;

\draw  [fill={rgb, 255:red, 0; green, 0; blue, 0 }  ,fill opacity=1 ] (383.58,205.99) .. controls (383.56,204.91) and (384.51,204.02) .. (385.69,204) .. controls (386.87,203.98) and (387.84,204.83) .. (387.86,205.91) .. controls (387.88,206.99) and (386.94,207.88) .. (385.76,207.9) .. controls (384.58,207.92) and (383.6,207.07) .. (383.58,205.99) -- cycle ;
\draw  [fill={rgb, 255:red, 0; green, 0; blue, 0 }  ,fill opacity=1 ] (381.57,95.3) .. controls (381.55,94.22) and (382.5,93.33) .. (383.68,93.31) .. controls (384.86,93.29) and (385.84,94.14) .. (385.85,95.22) .. controls (385.87,96.3) and (384.93,97.19) .. (383.75,97.21) .. controls (382.57,97.23) and (381.59,96.38) .. (381.57,95.3) -- cycle ;
\draw [color={rgb, 255:red, 0; green, 42; blue, 255 }  ,draw opacity=1 ]   (385.72,205.95) .. controls (385,228.75) and (399.31,224.22) .. (399.5,234.75) ;
\draw  [color={rgb, 255:red, 0; green, 42; blue, 255 }  ,draw opacity=1 ] (389.58,228.53) .. controls (390.23,225.76) and (390.24,223.58) .. (389.6,221.98) .. controls (390.89,222.99) and (392.81,223.42) .. (395.38,223.25) ;
\draw  [color={rgb, 255:red, 255; green, 0; blue, 4 }  ,draw opacity=1 ] (372.36,71.21) .. controls (369.68,67.62) and (366.71,64.8) .. (363.45,62.74) .. controls (367,64.03) and (370.82,64.55) .. (374.93,64.31) ;

\end{tikzpicture}
    \caption{Counting the two one-way escape orbits (in blue) once and the singular periodic orbits (in red) twice, we get a total of eight escape orbits counted with multiplicity. The double multiplicity of the singular periodic orbit connecting the two components is split, each connected component of $Z$ contributing one to the count. This way, each component contributes exactly four escape orbits with multiplicity.}
    \label{fig:multiplicities}
\end{figure}

These considerations allow us to restate Theorem \ref{thm:main} more precisely as follows.

 \begin{theorembis}{thm:main}\label{thm:mainWithMultiplicities}
   Let $\alpha$ be a $b$-contact form on a $3$-dimensional manifold $(M,Z)$ without boundary, with $Z$ a closed embedded surface in $M$.
    Then there exists a $b$-contact form $C^\infty$-close to $\alpha$, such that the associated $b$-Reeb vector field has either
	\begin{enumerate}[(i)]
		\item exactly $4N$ escape orbits counted with multiplicity, where $N$ is the number of connected components of $Z$, or
		\item infinitely many escape orbits.
	\end{enumerate}
 The latter is always the case when $b_1(Z)>0$. Moreover, the set of $b$-contact forms exhibiting these properties is open in the $C^\infty$-topology
\end{theorembis} 

We note that any distribution between one-way escape orbits and singular periodic orbits adding up to $4N$ in the finite case is possible, though this is the subject of future work.
\end{remark}

\begin{remark}
    As a final remark, we note that on any prescribed manifold $M$ it is possible to have at least $2N$ escape orbits for $N$ as large as we want by adding components to the critical set and singularities in the $b$-contact form along these components. Studying topological obstructions to what critical sets are permissible, as well as showing that the bounds given here are sharp, are also some objectives of coming work. 
\end{remark}

\section{Counting escape orbits for $b$-Beltrami fields} \label{sec:escapeBeltrami}

In this final section we prove the second main theorem of this work. To some extent it is rather independent from Theorem~\ref{thm:main} because it holds for any $b$-Beltrami field associated to a generic asymptotically exact $b$-metric. While this has some implications (analogous to Theorem~\ref{thm:main}) in the context of Melrose $b$-contact forms, as discussed in~\cite{MOP22}, its main setting concerns $b$-Beltrami fields, and its proof cannot be approached using elementary Morse theory as in Theorem~\ref{thm:main} because of two reasons:
\begin{itemize}
\item A generic perturbation (even if $C^\infty$-small) of a $b$-Beltrami field for some fixed metric is \emph{no longer} a $b$-Beltrami field for the same metric.
\item The theorem claims that if we fix an asymptotically exact $b$-metric in some residual set of asymptotically exact $b$-metrics, \emph{any} $b$-Beltrami field computed with that metric exhibits the dynamical properties shown below. In particular, the $b$-Beltrami field may have zeros as a section of the $b$-tangent bundle, so it does not yield a $b$-Reeb field. 
\end{itemize}
In summary, although there are similarities and connections between Theorem~\ref{thm:main} and the theorem below, they must be understood as independent results.

\begin{theorem}\label{thm:main2}
	Let $(M,Z)$ be a $3$-dimensional $b$-manifold without boundary, and $Z$ a closed hypersurface in $M$. There exists a residual set $\widehat{\mathcal{G}^k_b}\subset\mathcal{G}^k_b$ in the space of asymptotically exact $b$-metrics such that any $b$-Beltrami vector field on $(M,Z,g)$ with $g\in\widehat{\mathcal{G}^k_b}$, which is not identically zero on $Z$, has either
	\begin{enumerate}[(i)]
		\item infinitely many escape orbits if $b_1(Z)>0$, or
		\item at least $2N$ escape orbits if $b_1(Z) = 0$, where $N$ is the number of connected components of $Z$. 
	\end{enumerate}
\end{theorem}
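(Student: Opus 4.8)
The plan is to mirror the proof of Theorem~\ref{thm:main}, with two changes: the direct $C^\infty$-small perturbation of the $b$-contact form is replaced by the choice of a metric in the residual set of Theorem~\ref{thm:hammorse}, and the exceptional Hamiltonian $-f|_Z$ is replaced by $-X_z|_Z$. First I would fix an asymptotically exact $b$-metric $g$ and work in a tubular neighbourhood of $Z$ with coordinates $(x,y,z)$ in which $g=P^*h+\frac{dz^2}{z^2}$ as in Definition~\ref{def:asymptoticallyexact}, writing $X=X_x\frac{\partial}{\partial x}+X_y\frac{\partial}{\partial y}+zX_z\frac{\partial}{\partial z}$. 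By the results of~\cite{MOP22} recalled in Remark~\ref{rem:derivatives}, the function $H:=-X_z|_Z$ is an eigenfunction of $\Delta_h$ with nonzero eigenvalue, the restriction $X|_Z$ is the Hamiltonian vector field of $H$ with respect to the area form $\lambda\,d\mathrm{vol}_h$, and the tangential components of $X$ on $Z$ are given by~\eqref{eq:beltramiexpression}. Note that $X$ is a genuine smooth vector field on $M$ (the coefficient $zX_z$ is smooth), but it may vanish as a section of ${}^bTM$ along $Z$, in which case there is no associated $b$-contact form; this is why one must rely on Remark~\ref{rem:derivatives} rather than on Proposition~\ref{prop:reebatz}.

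Next I would take $g$ in the residual set $\widehat{\mathcal{G}^k_b}$ of Theorem~\ref{thm:hammorse}, so that for \emph{every} $b$-Beltrami field $X$ computed with $g$ the function $H=-X_z|_Z$ is Morse and $0$ is a regular value, \emph{provided $H$ is not locally constant}. The crucial point is to verify that, under the hypothesis that $X$ is not identically zero on $Z$, the function $H$ is non-constant on each connected component of $Z$. Since $H$ is an eigenfunction of $\Delta_h$ on $Z_i$ with the same nonzero eigenvalue, on $Z_i$ it is either identically zero or non-constant (with zero mean, hence sign-changing). If $H\equiv0$ on some $Z_i$, then~\eqref{eq:beltramiexpression} forces $X_x|_{Z_i}=X_y|_{Z_i}=0$ as well, so $X$ vanishes along $Z_i$; a unique continuation argument for the elliptic $b$-Beltrami system (one checks that $H\equiv0$ on $Z_i$ forces all normal derivatives of $X$ to vanish there too) then yields $X\equiv0$ on the connected component of $M$ containing $Z_i$, contradicting the hypothesis. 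I expect this non-vanishing / unique-continuation step to be the main obstacle and the one genuinely new ingredient relative to Theorem~\ref{thm:main}: here $X$ cannot itself be perturbed ($C^\infty$-small perturbations of $X$ need not be $b$-Beltrami for $g$), which is exactly why one must perturb $g$ and invoke Theorem~\ref{thm:hammorse}, a genericity statement about all Laplace eigenfunctions simultaneously.

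With $g$ so chosen, the linear stability analysis runs verbatim as in the proof of Theorem~\ref{thm:main}. At a critical point $p\in Z$ of $H$ one has $X_x|_Z(p)=X_y|_Z(p)=0$ by~\eqref{eq:beltramiexpression} and $zX_z|_p=0$, so $X(p)=0$ as a section of $TM$, and in a Darboux chart for $\lambda\,d\mathrm{vol}_h$,
\begin{equation*}
    DX(p)=\left.\begin{pmatrix} H_{xy} & H_{yy}& *\\ -H_{xx} &-H_{xy} & * \\ 0 & 0& X_z|_Z \end{pmatrix}\right|_p,
\end{equation*}
with eigenvalues $\lambda_\pm=\pm\sqrt{-\Hess H(p)}$ and $\lambda_z=X_z|_Z(p)=-H(p)\neq0$, the last inequality because $0$ is a regular value of $H$. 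Hence: if $p$ is a saddle of $H$ (index $1$), then $p$ is a hyperbolic zero of $X$ carrying a two-dimensional invariant manifold transverse to $Z$, which contains infinitely many escape orbits with limit point $p$; if $p$ is a local extremum of $H$ (index $0$ or $2$), the center manifold of $X$ at $p$ is $Z$ and there is a one-dimensional transverse invariant manifold, giving exactly two escape orbits with limit point $p$, one on each side of $Z$.

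Finally I would conclude by Morse theory exactly as in the proof of Theorem~\ref{thm:main}, using $C_k\geq b_k(Z)$, where $C_k$ is the number of index-$k$ critical points of $H$ on $Z$, together with the non-constancy of $H$ on each component. If $b_1(Z)>0$ there is a critical point of $H$ of index $1$, hence infinitely many escape orbits. If $b_1(Z)=0$, every component of $Z$ is a sphere on which $H$ is non-constant and Morse; then either some component carries an index-$1$ critical point, producing infinitely many escape orbits, or each component carries exactly one maximum and one minimum, each contributing two escape orbits (possibly glued in pairs into singular periodic orbits, but in any case at least two distinct escape orbits per component), for a total of at least $2N$.
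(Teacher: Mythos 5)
Your proposal follows essentially the same route as the paper's proof: fix $g$ in the residual set of Theorem \ref{thm:hammorse}, note that zeros of $X$ on $Z$ are the critical points of the exceptional Hamiltonian $-X_z|_Z$, linearize $X$ there using \eqref{eq:beltramiexpression} to get eigenvalues $\pm\sqrt{-\Hess}$ in the tangential block and a nonzero transverse eigenvalue $-f(p)$ (since $0$ is a regular value), and conclude with the same Morse-theoretic count as in Theorem \ref{thm:main}. The only real difference is your unique-continuation digression for componentwise non-constancy, which the paper does not carry out: it simply deduces from the hypothesis that $-X_z|_Z$ (a $\Delta_h$-eigenfunction with nonzero eigenvalue, hence zero wherever locally constant, which by \eqref{eq:beltramiexpression} would force $X|_Z=0$) is not locally constant and then invokes Theorem \ref{thm:hammorse}, so your extra step is an added precaution rather than a different method.
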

Here $b_1(Z)$ denotes the first Betti number of the critical surface $Z$ and we use \textit{residual} as in Theorem \ref{thm:hammorse} and Remark \ref{rem:residual}: a countable intersection of open and dense subsets of $\mathcal{G}^k_b$. The case when $M$ has boundary is discussed in Remark \ref{rem:boundary} below.

\begin{proof}
We follow \cite{MOP22} and a linear analysis similar to the one done in the proof of  Theorem \ref{thm:main}. For the sake of completeness, we sketch the proof.

 Let $X$ be a $b$-Beltrami vector field on $(M,Z)$ for an asymptotically exact $b$-metric. Since limit points of escape orbits are critical points of $X$ on $Z$ (here we regard $X$ as a vector field in the usual sense, not as a $b$-vector field), the idea of the proof is to study the stable and unstable manifolds at these points. Since we are interested in the local behaviour around critical points, we use the same local coordinates $(x,y,z)$ introduced in Theorem \ref{thm:hammorse}. By the same theorem, the critical points of $X$ are in fact the critical points of the corresponding exceptional Hamiltonian $f(x,y):=-X_z(x,y,0)$ (see Remark \ref{rem:derivatives}). Let $p=(x_0,y_0,0)$ be a critical point of $f$. We now analyze the linear stability by computing the Jacobian matrix $DX(p)$. Recall from Remark \ref{rem:derivatives} that on $Z$,
    \begin{equation*}\label{eq:derivatives}
        \begin{cases}
            X_x =  -\frac{1}{\lambda\sqrt{\det h}}\frac{\partial f}{\partial y}\\
            X_y =   \frac{1}{\lambda\sqrt{\det h}}\frac{\partial f}{\partial x}.
        \end{cases}
    \end{equation*}
    Therefore, computing $\frac{\partial}{\partial x} X_x(p)$, for example, we obtain
    \begin{align*}\label{matrix}
	\frac{\partial X_x}{\partial x}(p) &= 
	\frac{\partial}{\partial x} \left(\frac{-1}{\lambda\sqrt{\det h}}\frac{\partial f}{\partial y} \right)(p) =
	\frac{\partial}{\partial x} \left(\frac{-1}{\lambda\sqrt{\det h}}\right)\frac{\partial f}{\partial y} (p) + \frac{-1}{\lambda\sqrt{\det h}}\frac{\partial^2 f}{\partial x\partial y} (p) =  \\
    &= -\frac{1}{\lambda\sqrt{\det h}}\frac{\partial^2 f}{\partial x\partial y}(p),
	\end{align*}
    where in the last equality we have used that $p$ is a critical point of $f$. Similarly one computes $\frac{\partial}{\partial y}X_x$,  $\frac{\partial}{\partial x}X_y$ and $\frac{\partial}{\partial y}X_y$ at $p$. For the $z$ component of $X$, we see that $\frac{\partial}{\partial x} (zX_z)(p) = 0$ because $p\in Z = \{z = 0\}$, and likewise $\frac{\partial}{\partial y} (zX_z)(p) = 0$. Finally,
    \begin{equation*}
        \frac{\partial}{\partial z} (zX_z)(p) = X_z(p) = -f(p),
    \end{equation*}
    so bringing everything together we obtain
	\begin{equation*}
	DX(p)= \frac{1}{\lambda\sqrt{\det h}} 
	\left.\begin{pmatrix}
  	-\frac{\partial^2 f}{\partial x\partial y}  & -\frac{\partial^2 f}{\partial y^2} & * \\
  	\frac{\partial^2 f}{\partial x^2} & \frac{\partial^2 f}{\partial x\partial y} & * \\
  	0 & 0 & -\lambda\sqrt{\det h}f
	\end{pmatrix}\right|_p .
	\end{equation*}
    It is easy to determine the linear stability at $p$ by looking at the eigenvalues of this matrix. The eigenvalues are $\lambda_+,\lambda_-$ and $\lambda_z$, where $\lambda_+$ and $\lambda_-$ are eigenvalues of the first $2\times 2$ minor,
    \begin{equation*}
        \lambda_\pm = \pm\frac{\sqrt{-\Hess f(p)}}{\lambda\sqrt{\det h}},
    \end{equation*}
    and $\lambda_z=-f(p)$. 
    
    Assume now as in the statement of the theorem that the asymptotically exact $b$-metric is generic, in the sense that $g\in\widehat{\mathcal{G}^k_b}$ (see also Remark \ref{rem:residual}), and that $X$ is not identically zero on $Z$, so $f$ is not locally constant. Then by Theorem \ref{thm:hammorse}, $f$ is a Morse function with a regular zero set. Therefore, the matrix $DX(p)$ is non-singular and there are stable, unstable and possibly centre manifolds around the critical point which yield escape orbits.
    
    The proof then finishes by the same arguments of the proof of Theorem \ref{thm:main}.

\end{proof}

\begin{remark}\label{rem:boundary}
    As before, when the critical set $Z$ has some components which are on the boundary of $M$, for the case $b_1(Z)=0$ one loses an escape orbit at each critical point on the boundary (the escape orbit that would be on the outside of $M$). Point $(ii)$ of Theorem \ref{thm:main2} would then read \textit{at least $N_1$ + $2N_2$ escape orbits if $b_1(Z) = 0$, where $N_1$ and $N_2$ are the number of connected components of $Z$ on the boundary and the interior of $M$ respectively}.
\end{remark}



\begin{thebibliography}{9}

\bibitem{invitation} Braddell, R., Delshams, A., Miranda, E., Oms, C. \& Planas, A. An invitation to singular symplectic geometry. {\em International Journal Of Geometric Methods In Modern Physics}. \textbf{16}, 1940008 (2019).

\bibitem{C13}  Cavalcanti, G. R. Examples and counter‐examples of log‐symplectic manifolds. \emph{Journal of Topology}, \textbf{10}(1), 1-21 (2017). 

\bibitem{CMP19} Cardona, R., Miranda, E. \& Peralta-Salas, D. Euler flows and singular geometric structures. {\em Philosophical Transactions Of The Royal Society A: Mathematical, Physical And Engineering Sciences}, \textbf{377}, 20190034 (2019), https://royalsocietypublishing.org/doi/abs/10.1098/rsta.2019.0034.

\bibitem{CMP21} Cardona, R., Miranda, E., Peralta-Salas, D. \& Francisco Presas Constructing Turing complete Euler flows in dimension 3. {\em Proceedings Of The National Academy Of Sciences}. \textbf{118}, e2026818118 (2021), https://www.pnas.org/doi/abs/10.1073/pnas.2026818118.



\bibitem{chenciner1} Chenciner A. Poincaré and the three-body problem, \emph{Henri Poincaré, 1912–2012: Poincaré Seminar 2012}, Basel: Springer Basel, (2014).

\bibitem{chenciner2} Chenciner A. A l’infini en temps fini, \emph{Séminaire Bourbaki} 832 (1997): 323-353.

\bibitem{CDR22} Colin, V., Dehornoy, P. \& Rechtman, A. On the existence of supporting broken book decompositions for contact forms in dimension 3. \emph{Invent. math.} \textbf{231}, 1489–1539 (2023)

\bibitem{CH} Cristofaro-Gardiner D. \& Hutchings M. From one Reeb orbit to two, \emph{Journal of Differential Geometry} 102.1 (2016): 25-36.

\bibitem{DKRS} Delshams, A., Kaloshin, V., De la Rosa, A. \& Seara, T. Global Instability in the Restricted Planar Elliptic Three Body Problem. {\em Communications In Mathematical Physics}. \textbf{366}, 1173-1228 (2019), https://doi.org/10.1007/s00220-018-3248-z

\bibitem{EG}  Etnyre, J. \& Ghrist, R. Contact topology and hydrodynamics: I. Beltrami fields and the Seifert conjecture. \emph{Nonlinearity} \textbf{13}, no. 2 (2000): 441-458.




\bibitem{GG} 
V. Ginzburg and B. Gurel, The Conley conjecture and beyond, Arnold Math. J. 3
(2015), no. 1, 299–337.

\bibitem{GL13} Gualtieri, M. \& Li, S. Symplectic Groupoids of Log Symplectic Manifolds, {\em International Mathematics Research Notices}. \textbf{2014}, 3022-3074 (2014)

\bibitem{GMP14} Guillemin, V., Miranda, E. \& Pires, A. Symplectic and Poisson geometry on b-manifolds, {\em Advances In Mathematics}. \textbf{264} pp. 864-896 (2014).

\bibitem{irie} Irie, K. Dense existence of periodic Reeb orbits and ECH spectral invariants. {\em Journal Of Modern Dynamics}. \textbf{9} pp. 357-363 (2015).

\bibitem{KMS16} Kiesenhofer, A., Miranda, E. \& Scott, G. Action-angle variables and a KAM theorem for b-Poisson manifolds. {\em Journal De Mathématiques Pures Et Appliquées}. \textbf{105}, 66-85 (2016), https://www.sciencedirect.com/science/article/pii/S0021782415001221.

\bibitem{M93} Melrose R. The Atiyah–Patodi–Singer Index Theorem \emph{CRC Press}, (1993).

\bibitem{MO18} Miranda, E. \& Oms, C. Contact structures with singularities. Preprint  (2018), arXiv:1806.05638.

\bibitem{MO21} Miranda, E. \& Oms, C. The singular Weinstein conjecture, {\em Advances In Mathematics}. \textbf{389} pp. 107925 (2021), https://www.sciencedirect.com/science/article/pii/S0001870821003649.

\bibitem{MOP22} Miranda, E., Oms, C. \& Peralta-Salas, D. On the singular Weinstein conjecture and the existence of escape orbits for b-Beltrami fields, {\em Communications In Contemporary Mathematics}. \textbf{24}, 2150076 (2022), https://doi.org/10.1142/S0219199721500760.

\bibitem{MS21} Miranda, E. \& Scott, G. The geometry of E-manifolds, {\em Revista Matemática Iberoamericana}. \textbf{37}  no 3, 1207--1224 (2018).

\bibitem{NT96} Nest, R. \& Tsygan, B. Formal deformations of symplectic manifolds with boundary, {\em Journal für die reine und angewandte Mathematik}., 27-54 (1996)

\bibitem{PR} Peralta-Salas, D. \& Slobodeanu, R. Contact structures and Beltrami fields on the torus and the sphere, \emph{Indiana University Mathematics Journal}, \textbf{72}, 699-730 (2023).


\bibitem{PW} Del Pino, Á., \& Witte, A. (2022). Regularisation of Lie algebroids and Applications. arXiv preprint arXiv:2211.14891.

\bibitem{S16} Scott G. The Geometry of $b^k$-Manifolds, \emph{Journal of Symplectic Geometry}, \textbf{14} (2016), no. 1, 71–95.

\bibitem{Sullivan} https://www.math.stonybrook.edu/Videos/Einstein/425-19941109-Sullivan.html

\bibitem{T05} Taubes C. The Seiberg–Witten equations and the Weinstein conjecture, \emph{Geometry \& Topology}, 11.4 (2007): 2117-2202.

\bibitem{U76} Uhlenbeck K. Generic properties of eigenfunctions, \emph{Amer. J. Math.} \textbf{98}(4) (1976) 1059–1078.

\bibitem{VW} Vogel, M., \& Wisniewska, J. (2023). $b$-Contact Structures on Tentacular Hyperboloids. arXiv preprint arXiv:2303.01164.

\bibitem{W79}  Weinstein A. On the hypotheses of Rabinowitz' periodic orbit theorems, \emph{Journal of differential equations}, 33.3 (1979): 353-358.



\end{thebibliography}
\end{document}